\newcommand{\sysn}{\left\{\begin{array}{rcl}}
\newcommand{\sysk}{\end{array}\right.}
\newtheorem{theorem}{Theorem}[section]
\newtheorem{lemma}[theorem]{Lemma}
\theoremstyle{example}
\newtheorem{example}[theorem]{Example}
\newtheorem{proposition}[theorem]{Proposition}
\theoremstyle{definition}
\newtheorem{definition}[theorem]{Definition}
\newtheorem{remark}[theorem]{Remark}
\newtheorem{corollary}[theorem]{Corollary}
\journal{...}
\begin{document}

\begin{frontmatter}



\title{Selection principles in function spaces with the compact-open topology}


\author{Alexander V. Osipov}


\ead{OAB@list.ru}


\address{Krasovskii Institute of Mathematics and Mechanics, Ural Federal
 University,

 Ural State University of Economics, 620219, Yekaterinburg, Russia}

\begin{abstract}
For a Tychonoff space $X$, we denote by $C_k(X)$ the space of all
real-valued continuous functions on $X$ with the compact-open
topology. A subset $A\subset X$ is said to be sequentially dense
in $X$ if every point of $X$ is the limit of a convergent sequence
in $A$. In this paper, the following properties for $C_k(X)$ are
considered.

\medskip
\begin{center}
$S_1(\mathcal{S},\mathcal{S}) \Rightarrow
S_{fin}(\mathcal{S},\mathcal{S}) \Rightarrow
S_1(\mathcal{S},\mathcal{D}) \Rightarrow
S_{fin}(\mathcal{S},\mathcal{D})$ \\  \, \, $\Uparrow$ \, \, \, \,
\,\, \,  \, \, \, $ \Uparrow $ \,\, \, \, \,\, \, \,
\, \, \, $\Uparrow $ \,\, \, \, \,\, \, \,\, \, \, $\Uparrow$ \\
$S_1(\mathcal{D},\mathcal{S}) \Rightarrow
S_{fin}(\mathcal{D},\mathcal{S}) \Rightarrow
S_1(\mathcal{D},\mathcal{D}) \Rightarrow
S_{fin}(\mathcal{D},\mathcal{D})$

\end{center}

\bigskip

For example, a space $C_k(X)$ satisfies
$S_1(\mathcal{S},\mathcal{D})$ (resp.,
$S_{fin}(\mathcal{S},\mathcal{D}))$ if whenever $(S_n : n\in
\mathbb{N})$ is a sequence of sequentially dense subsets of
$C_k(X)$, one can take points $f_n\in S_n$ (resp., finite
$F_n\subset S_n$) such that $\{f_n : n\in \mathbb{N}\}$ (resp.,
$\bigcup \{F_n: n\in \mathbb{N}\}$) is dense in $C_k(X)$. Other
properties are defined similarly.

In \cite{os2}, we obtained characterizations these selection
properties for $C_p(X)$. In this paper, we have gave
characterizations for $C_k(X)$.

\end{abstract}

\begin{keyword}
 compact-open topology \sep function space \sep $R$-separable \sep
 $M$-separable \sep $\gamma_k$-set \sep sequentially separable \sep strongly sequentially
 separable \sep selectively sequentially separable \sep selection principles


\MSC[2010]   54C25 \sep 54C35  \sep 54C40 \sep 54D20

\end{keyword}

\end{frontmatter}



\section{Introduction}
\label{}

For a Tychonoff space $X$, we denote by $C_k(X)$ the space of all
real-valued continuous functions on $X$ with the compact-open
topology. Subbase open sets of $C_k(X)$ are of the form $[A,
U]=\{f\in C(X): f(A)\subset U \}$, where $A$ is a compact subset
of $X$  and $U$ is a non-empty open subset of $\mathbb{R}$. Since
the compact-open topology coincides with the topology of uniform
convergence on compact subsets of $X$, we can represent a basic
neighborhood of the point $f\in C(X)$ as $<f,A,\epsilon>$ where
$<f,A,\epsilon>:=\{g\in C(X): |f(x)-g(x)|<\epsilon$ $\forall$
$x\in A\}$, $A$ is a compact subset of $X$ and $\epsilon>0$.

Many topological properties are defined or characterized in terms
 of the following classical selection principles.
 Let $\mathcal{A}$ and $\mathcal{B}$ be sets consisting of
families of subsets of an infinite set $X$. Then:

$S_{1}(\mathcal{A},\mathcal{B})$ is the selection hypothesis: for
each sequence $(A_{n}: n\in \mathbb{N})$ of elements of
$\mathcal{A}$ there is a sequence $\{b_{n}\}_{n\in \mathbb{N}}$
such that for each $n$, $b_{n}\in A_{n}$, and $\{b_{n}:
n\in\mathbb{N} \}$ is an element of $\mathcal{B}$.

$S_{fin}(\mathcal{A},\mathcal{B})$ is the selection hypothesis:
for each sequence $(A_{n}: n\in \mathbb{N})$ of elements of
$\mathcal{A}$ there is a sequence $\{B_{n}\}_{n\in \mathbb{N}}$ of
finite sets such that for each $n$, $B_{n}\subseteq A_{n}$, and
$\bigcup_{n\in\mathbb{N}}B_{n}\in\mathcal{B}$.

$U_{fin}(\mathcal{A},\mathcal{B})$ is the selection hypothesis:
whenever $\mathcal{U}_1$, $\mathcal{U}_2, ... \in \mathcal{A}$ and
none contains a finite subcover, there are finite sets
$\mathcal{F}_n\subseteq \mathcal{U}_n$, $n\in \mathbb{N}$, such
that $\{\bigcup \mathcal{F}_n : n\in \mathbb{N}\}\in \mathcal{B}$.

\medskip
The following prototype of many classical properties is called
"$\mathcal{A}$ choose $\mathcal{B}$" in \cite{tss}.

${\mathcal{A}\choose\mathcal{B}}$ : For each $\mathcal{U}\in
\mathcal{A}$ there exists $\mathcal{V}\subseteq \mathcal{U}$ such
that $\mathcal{V}\in \mathcal{B}$.

Then $S_{fin}(\mathcal{A},\mathcal{B})$ implies
${\mathcal{A}\choose\mathcal{B}}$.

In this paper, by a cover we mean a nontrivial one, that is,
$\mathcal{U}$ is a cover of $X$ if $X=\bigcup \mathcal{U}$ and
$X\notin \mathcal{U}$.

An open cover $\mathcal{U}$ of a space $X$ is called:

$\bullet$  an $\omega$-cover (a $k$-cover) if each finite
(compact) subset $C$ of $X$ is contained in an element of
$\mathcal{U}$ and $X\notin \mathcal{U}$ (i.e. $\mathcal{U}$ is a
non-trivial cover);

$\bullet$  a $\gamma$-cover (a $\gamma_k$-cover) if $\mathcal{U}$
is infinite, $X\notin \mathcal{U}$, and for each finite (compact)
subset $C$ of $X$ the set $\{U\in \mathcal{U} : C\nsubseteq U\}$
is finite.

A space $X$ is said to be a $\gamma_k$-set if each $k$-cover
$\mathcal{U}$ of $X$ contains a countable set $\{U_n : n\in
\mathbb{N}\}$ which is a $\gamma_k$-cover of $X$ \cite{koc1}.

In a series of papers it was demonstrated that $\gamma$-covers and
$k$-covers play a key role in function spaces
\cite{koc,koc1,koc2,mkn,mc, os2,os4,os3,pp,sak1} and many others.
We continue to investigate applications of $k$-covers in function
spaces with the compact-open topology.

\section{Main definitions and notation}

 If $X$ is a space and $A\subseteq X$, then the sequential closure of $A$,
 denoted by $[A]_{seq}$, is the set of all limits of sequences
 from $A$. A set $D\subseteq X$ is said to be sequentially dense
 if $X=[D]_{seq}$. A space $X$ is called sequentially separable if
 it has a countable sequentially dense set.  Call $X$ strongly sequentially separable,
  if $X$ is separable and  every countable dense subset of $X$ is sequentially dense.
 Clearly, every strongly sequentially separable space is
 sequentially separable, and every sequentially separable space is
 separable.

\medskip
For a topological space $X$ we denote:

$\bullet$ $\mathcal{O}$ --- the family of open covers of $X$;

$\bullet$ $\Gamma$ --- the family of open $\gamma$-covers of $X$;

$\bullet$ $\Gamma_k$ --- the family of open $\gamma_k$-covers of
$X$;

$\bullet$ $\Omega$ --- the family of open $\omega$-covers of $X$;

$\bullet$ $\mathcal{K}$ --- the family of open $k$-covers of $X$;

$\bullet$ $\mathcal{K}_{cz}^{\omega}$ --- the family of countable
co-zero $k$-covers of $X$;

$\bullet$ $\mathcal{D}$ --- the family of dense subsets of
$C_k(X)$;

$\bullet$ $\mathcal{D}^{\omega}$ --- the family of countable dense
subsets of $C_k(X)$;

$\bullet$ $\mathcal{S}$ --- the family of sequentially dense
subsets of $C_k(X)$;

$\bullet$ $\mathbb{K}(X)$ --- the family of all non-empty compact
subsets of $X$.

\bigskip

$\bullet$ A space $X$ is $R$-separable, if $X$ satisfies
$S_1(\mathcal{D}, \mathcal{D})$ (Def. 47, \cite{bbm1}).

$\bullet$ A space $X$ is $M$-separable (selective separability),
if $X$ satisfies $S_{fin}(\mathcal{D}, \mathcal{D})$.

$\bullet$ A space $X$ is selectively sequentially separable, if
$X$ satisfies $S_{fin}(\mathcal{S}, \mathcal{S})$ (Def. 1.2,
\cite{bc}).

\medskip

For a topological space $X$ we have the next relations of
selectors for sequences of dense sets of $X$.

\medskip
\begin{center}
$S_1(\mathcal{S},\mathcal{S}) \Rightarrow
S_{fin}(\mathcal{S},\mathcal{S}) \Rightarrow
S_1(\mathcal{S},\mathcal{D}) \Rightarrow
S_{fin}(\mathcal{S},\mathcal{D})$ \\  \, \, $\Uparrow$ \, \, \, \,
\,\, \,  \, \, \, $ \Uparrow $ \,\, \, \, \,\, \, \,
\, \, \, $\Uparrow $ \,\, \, \, \,\, \, \,\, \, \, $\Uparrow$ \\
$S_1(\mathcal{D},\mathcal{S}) \Rightarrow
S_{fin}(\mathcal{D},\mathcal{S}) \Rightarrow
S_1(\mathcal{D},\mathcal{D}) \Rightarrow
S_{fin}(\mathcal{D},\mathcal{D})$

\end{center}

\bigskip

Let $X$ be a topological space, and $x\in X$. A subset $A$ of $X$
{\it converges} to $x$, $x=\lim A$, if $A$ is infinite, $x\notin
A$, and for each neighborhood $U$ of $x$, $A\setminus U$ is
finite. Consider the following collection:

$\bullet$ $\Omega_x=\{A\subseteq X : x\in \overline{A}\setminus
A\}$;

$\bullet$ $\Gamma_x=\{A\subseteq X : x=\lim A\}$.

Note that if $A\in \Gamma_x$, then there exists $\{a_n\}\subset A$
converging to $x$. So, simply $\Gamma_x$ may be the set of
non-trivial convergent sequences to $x$.

\bigskip

We write $\Pi (\mathcal{A}_x, \mathcal{B}_x)$ without specifying
$x$, we mean $(\forall x) \Pi (\mathcal{A}_x, \mathcal{B}_x)$.

\bigskip

 So we have three types of topological properties
described through the selection principles:

$\bullet$  local properties of the form $S_*(\Phi_x,\Psi_x)$;

$\bullet$  global properties of the form $S_*(\Phi,\Psi)$;

$\bullet$  semi-local of the form $S_*(\Phi,\Psi_x)$.

\medskip

Our main goal is to describe the topological properties for
sequences of dense sets of $C_k(X)$  in terms of selection
principles of $X$.

\section{$S_{1}(\mathcal{D},\mathcal{S})$}

Recall that $X$ a $\gamma'_k$-set if it satisfies the selection
hypothesis  $S_{1}(\mathcal{K},\Gamma_{k})$ \cite{koc1}.

\begin{theorem}(\cite{llt})\label{th11} For a Tychonoff space $X$ the following statements are
equivalent:

\begin{enumerate}

\item  $C_k(X)$ satisfies $S_{1}(\Omega_{\bf 0},\Gamma_{\bf 0})$
(i.e., $C_k(X)$ is strongly Fr$\acute{e}$chet-Urysohn);

\item  $X$ is a $\gamma'_k$-set.

\end{enumerate}

\end{theorem}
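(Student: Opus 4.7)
The plan is to prove both directions of the equivalence by relating $k$-covers of $X$ to basic neighborhoods of the zero function $\mathbf{0}$ in $C_k(X)$, which have the form $[K,(-\varepsilon,\varepsilon)]$ for $K \in \mathbb{K}(X)$ and $\varepsilon > 0$.

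For $(2) \Rightarrow (1)$, I would start with a sequence $(A_n : n \in \mathbb{N})$ with $A_n \in \Omega_{\mathbf{0}}$ and, for each $f \in A_n$, form the cozero set $U^n_f = f^{-1}((-1/n, 1/n))$. Setting $\mathcal{U}_n = \{U^n_f : f \in A_n,\ U^n_f \neq X\}$, the fact that $\mathbf{0} \in \overline{A_n}$ implies that $\mathcal{U}_n$ is a $k$-cover of $X$: every basic neighborhood $[K,(-1/n,1/n)]$ of $\mathbf{0}$ meets $A_n$, producing an $f$ with $K \subset U^n_f$. (If this fails because every witness has $U^n_f = X$, that witness itself has uniform norm less than $1/n$ and can serve as $f_n$ directly.) I would then apply $S_1(\mathcal{K}, \Gamma_k)$ to extract $U^n_{f_n} \in \mathcal{U}_n$ forming a $\gamma_k$-cover of $X$; for any compact $K$ and $\varepsilon > 0$, taking $n$ large enough that $K \subset U^n_{f_n}$ and $1/n < \varepsilon$ yields $|f_n(x)| < \varepsilon$ on $K$, which gives $f_n \to \mathbf{0}$.

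For $(1) \Rightarrow (2)$, I would begin with open $k$-covers $\mathcal{U}_n$ of $X$ (so $X \notin \mathcal{U}_n$) and, for each $U \in \mathcal{U}_n$ and each $K \in \mathbb{K}(X)$ with $K \subset U$, construct $f_{K,U} \in C(X)$ with $f_{K,U}|_K \equiv 0$ and $f_{K,U}|_{X \setminus U} \equiv 1$. Such a function exists because a Tychonoff space separates a compact set from a disjoint closed set by a continuous $[0,1]$-valued function, obtained by taking the finite minimum of point-separating Tychonoff functions over an open subcover of $K$ and then truncating. The set $A_n = \{f_{K,U} : K \in \mathbb{K}(X),\ U \in \mathcal{U}_n,\ K \subset U\}$ lies in $\Omega_{\mathbf{0}}$: the $k$-cover property puts $\mathbf{0}$ in $\overline{A_n}$, and every $f_{K,U}$ attains the value $1$, so $\mathbf{0} \notin A_n$. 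Applying $S_1(\Omega_{\mathbf{0}}, \Gamma_{\mathbf{0}})$ gives $f_n = f_{K_n, U_n} \in A_n$ with $f_n \to \mathbf{0}$, and then $\{U_n : n \in \mathbb{N}\}$ is a $\gamma_k$-cover: for any compact $K$ and all sufficiently large $n$ we have $|f_n(x)| < 1$ on $K$, but $f_n \equiv 1$ on $X \setminus U_n$, forcing $K \subset U_n$.

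The main obstacle I anticipate is the bookkeeping in $(1) \Rightarrow (2)$: justifying the Tychonoff separation construction of $f_{K,U}$ and confirming that the derived family $\{U_n\}$ is infinite, as required by the definition of a $\gamma_k$-cover. The latter holds because otherwise some $U \neq X$ would appear infinitely often in the sequence, contradicting the derived $\gamma_k$-cover condition at any singleton $\{x\} \subset X \setminus U$.
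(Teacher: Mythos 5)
Your argument is correct: both directions use the standard duality between $k$-covers of $X$ and neighborhoods of $\bf 0$ in $C_k(X)$, and you handle the two delicate points (the witnesses with $U^n_f=X$, and the infiniteness of the selected family $\{U_n\}$) properly. Note that the paper itself gives no proof of this theorem --- it is quoted from \cite{llt} --- but your construction is exactly the technique the paper uses in its own proofs (e.g.\ the implications $(2)\Rightarrow(4)$, $(3)\Rightarrow(1)$ and $(10)\Rightarrow(3)$ of Theorem \ref{th1}), so it is the intended argument.
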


\medskip
Recall that the $i$-weight $iw(X)$ of a space $X$ is the smallest
infinite cardinal number $\tau$ such that $X$ can be mapped by a
one-to-one continuous mapping onto a Tychonoff space of the weight
not greater than $\tau$.

\medskip

\begin{theorem} (Noble \cite{nob}) \label{th31}  A space $C_{k}(X)$ is separable iff
$iw(X)=\aleph_0$.
\end{theorem}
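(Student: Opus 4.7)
\bigskip

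\noindent\textbf{Proof plan.}

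For the forward implication, suppose $C_k(X)$ has a countable dense subset $\{f_n : n \in \mathbb{N}\}$. I would define the evaluation map $e : X \to \mathbb{R}^{\mathbb{N}}$ by $e(x) = (f_n(x))_{n \in \mathbb{N}}$. Continuity is immediate since each coordinate $f_n$ is continuous. For injectivity, pick $x \neq y$ in $X$; by the Tychonoff property there is an $f \in C(X)$ with $|f(x)-f(y)| = 3\varepsilon > 0$. The set $A = \{x,y\}$ is compact, so $\langle f, A, \varepsilon\rangle$ is a basic open neighborhood of $f$ in $C_k(X)$, and by density it contains some $f_n$. Then $|f_n(x)-f_n(y)| > \varepsilon$, so $e(x) \neq e(y)$. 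Thus $e$ is a continuous bijection from $X$ onto a Tychonoff subspace of $\mathbb{R}^{\mathbb{N}}$, whose weight is at most $\aleph_0$. Hence $iw(X) = \aleph_0$.

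For the reverse implication, assume $iw(X) = \aleph_0$, witnessed by a continuous bijection $\phi : X \to Y$ onto a Tychonoff space $Y$ with $w(Y) \le \aleph_0$. Then $Y$ embeds in $\mathbb{R}^{\mathbb{N}}$; write $g_n = \pi_n \circ \phi \in C(X)$, where $\pi_n$ is the $n$-th coordinate projection. Let $D$ be the (countable) collection of all polynomials in finitely many $g_n$'s with rational coefficients. I claim $D$ is dense in $C_k(X)$.

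To verify this, fix $f \in C(X)$, a compact set $K \subseteq X$, and $\varepsilon > 0$. The restriction $\phi|_K : K \to \phi(K)$ is a continuous bijection from a compact space to a Hausdorff space, hence a homeomorphism. So $h := f \circ (\phi|_K)^{-1}$ is a continuous function on the compact metrizable space $\phi(K) \subseteq \mathbb{R}^{\mathbb{N}}$. The coordinate projections $\pi_n|_{\phi(K)}$ separate points of $\phi(K)$, so by the Stone--Weierstrass theorem there is a polynomial $P$ with rational coefficients in finitely many $\pi_n$'s with $|P(y) - h(y)| < \varepsilon$ for all $y \in \phi(K)$. Pulling back via $\phi$ gives an element $P(g_{n_1}, \ldots, g_{n_k}) \in D$ satisfying $|P(g_{n_1}(x), \ldots, g_{n_k}(x)) - f(x)| < \varepsilon$ for every $x \in K$, i.e.\ an element of $D$ inside the basic neighborhood $\langle f, K, \varepsilon \rangle$.

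The main obstacle is the backward direction: one has to convert the abstract hypothesis $iw(X) = \aleph_0$ into an explicit countable family that is dense \emph{in the compact-open topology}, and the key observation that makes this work is the automatic upgrade of $\phi|_K$ to a homeomorphism on compact sets, which lets Stone--Weierstrass do the approximation work uniformly on each compact $K$. The forward direction, by contrast, is a routine consequence of density combined with the Tychonoff separation axiom.
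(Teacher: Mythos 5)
Your proof is correct. The paper does not prove this statement --- it is quoted from Noble's paper \cite{nob} without proof --- and your argument is the standard one: the forward direction via the evaluation condensation into $\mathbb{R}^{\mathbb{N}}$, and the backward direction via rational polynomials in the coordinates of a condensation onto a second countable space, using that $\phi|_K$ is automatically a homeomorphism and then Stone--Weierstrass on $\phi(K)$ (with the only cosmetic gap being that Stone--Weierstrass yields real coefficients, which you then replace by rationals using boundedness of the finitely many monomials on the compact set $\phi(K)$).
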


\medskip

\begin{theorem}\label{th1} For a Tychonoff space $X$ with $iw(X)=\aleph_0$ the following statements are
equivalent:

\begin{enumerate}

\item  $C_k(X)$ satisfies $S_{1}(\mathcal{D},\mathcal{S})$;

\item   Every dense subset of $C_k(X)$ is sequentially dense;

\item  $X$ satisfies $S_{1}(\mathcal{K},\Gamma_{k})$ ($X$ is a
$\gamma'_k$-set);

\item $X$ is a $\gamma_k$-set;

\item $C_k(X)$ is Fr$\acute{e}$chet-Urysohn;

\item $C_k(X)$ satisfies $S_{fin}(\mathcal{D},\mathcal{S})$;

\item  $X$ satisfies $S_{fin}(\mathcal{K},\Gamma_{k})$;

\item Each finite power of $X$ satisfies
$S_{fin}(\mathcal{K},\Gamma_{k})$;

\item $C_k(X)$ satisfies $S_{1}(\Omega_{\bf 0},\Gamma_{\bf 0})$;

\item $C_k(X)$ satisfies $S_{1}(\mathcal{D},\Gamma_{\bf 0})$.

\end{enumerate}

\end{theorem}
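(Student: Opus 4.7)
The plan is to arrange the ten conditions in a cycle of implications, using Theorem~\ref{th11} to supply $(3)\Leftrightarrow(9)$ and Theorem~\ref{th31} to secure a countable dense set in $C_k(X)$. The implications $(1)\Rightarrow(6)$, $(3)\Rightarrow(7)$, $(8)\Rightarrow(7)$, and $(9)\Rightarrow(5)$ follow directly from the definitions ($S_1\Rightarrow S_{fin}$; strong Fr\'echet-Urysohn implies Fr\'echet-Urysohn; $X$ is a finite power of itself).

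For the cover-theoretic equivalences $(3)\Leftrightarrow(4)\Leftrightarrow(7)\Leftrightarrow(8)$ I would adapt the classical Gerlits--Nagy arguments for $\gamma$-sets to $k$-covers. The assumption $iw(X)=\aleph_0$ lets one reduce an arbitrary $k$-cover to a countable $k$-subcover, and then: (i) extract a $\gamma_k$-subcover to obtain $(4)\Rightarrow(3)$, (ii) upgrade finite to single-element selection by enumerating the finite selectors and iterating the $\gamma_k$-set property, obtaining $(7)\Rightarrow(3)$, and (iii) lift to finite powers via products $K_1\times\cdots\times K_n$ of compacta together with refinements of $k$-covers of $X^n$ into products of $k$-covers of $X$, giving $(7)\Leftrightarrow(8)$.

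The central bridge is $(9)\Rightarrow(1)$. Given dense sets $D_n\subseteq C_k(X)$, fix a countable dense set $\{g_k:k\in\mathbb{N}\}$ via Theorem~\ref{th31} and partition $\mathbb{N}$ into pairwise disjoint infinite sets $N_1,N_2,\ldots$. For each $k$ the translated sequence $(D_n-g_k)_{n\in N_k}$ consists of dense sets, so $\mathbf{0}$ lies in each of their closures; applying $(9)$ selects $h_n\in D_n-g_k$ with $h_n\to\mathbf{0}$ along $N_k$, and then $f_n:=h_n+g_k\in D_n$ converges to $g_k$ along $N_k$. Consequently $\{g_k\}\subseteq\overline{\{f_n:n\in\mathbb{N}\}}$, so $\{f_n\}$ is dense; and since Fr\'echet-Urysohn of $C_k(X)$ (condition $(5)$, implied by $(9)$) forces every dense set to be sequentially dense, $\{f_n\}$ is the required selector. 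The same construction with $g_k\equiv\mathbf{0}$ yields $(9)\Rightarrow(10)$.

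To close the cycle I would use $(6)\Rightarrow(2)$ (apply $(6)$ to the constant sequence $D_n=D$; the resulting $\bigcup F_n\subseteq D$ is countable and sequentially dense, so $D$ itself is sequentially dense) followed by $(2)\Rightarrow(3)$ via the compact-open duality: from a sequence of open $k$-covers of $X$ build dense subsets of $C_k(X)$ encoded by functions pinched close to zero outside covering elements, then convert the sequential density forced by $(2)$ into a $\gamma_k$-selection on $X$. The main obstacle will be this dualization---engineering dense subsets of $C_k(X)$ from $k$-cover data so that sequential convergence at $\mathbf{0}$ faithfully records a $\gamma_k$-selection---whereas the diagonalization in $(9)\Rightarrow(1)$ is clean once one recognizes that Fr\'echet-Urysohn collapses dense to sequentially dense.
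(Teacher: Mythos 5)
Your cycle never exits condition (10). You derive $(9)\Rightarrow(10)$ (which is essentially immediate, since dense subsets of $C_k(X)$, after discarding ${\bf 0}$, form a subfamily of $\Omega_{\bf 0}$), but no implication with (10) as hypothesis appears anywhere in your plan, so the equivalence of (10) with the remaining nine conditions is not established. This is not a formality: $S_1(\mathcal{D},\Gamma_{\bf 0})$ is a priori weaker than $S_1(\Omega_{\bf 0},\Gamma_{\bf 0})$, and the paper closes the loop with a genuine construction for $(10)\Rightarrow(3)$ --- given $k$-covers $\mathcal{U}_i$ and a countable dense set $D=\{d_j\}$, it forms the dense sets $D_i$ of functions equal to some $d_j$ on a compact $K$ and to $1$ off some $U\in\mathcal{U}_i$ with $K\subset U$, selects $f_i\in D_i$ converging to ${\bf 0}$, and reads off a $\gamma_k$-selection from the witnessing sets $U(i)$. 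Note that your own $(9)\Rightarrow(1)$ argument only ever applies the selection hypothesis to the \emph{dense} sets $D_n-g_k$, so it really uses no more than (10) to produce a dense selector; but the upgrade from ``dense'' to ``sequentially dense'' invokes the Fr\'echet--Urysohn property, which you obtain from (9), not from (10). So (10) is left dangling and needs an argument such as the one above.

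A second, smaller concern: your item (i), ``extract a $\gamma_k$-subcover to obtain $(4)\Rightarrow(3)$,'' understates what is required. The $\gamma_k$-set property is a statement about a single $k$-cover, whereas $S_1(\mathcal{K},\Gamma_k)$ demands one element from each of countably many $k$-covers; the passage between them is the full Gerlits--Nagy-type diagonalization (Theorem 18 of \cite{cmkm}, which the paper simply cites), not a one-cover extraction. The same caveat applies to your $(2)\Rightarrow(3)$: the dualization honestly yields only $(2)\Rightarrow(4)$ (a single sequentially dense set built from a single $k$-cover produces a $\gamma_k$-subcover), after which $(4)\Rightarrow(3)$ must do the real work. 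Apart from these points your architecture is sound, and your $(9)\Rightarrow(1)$ via the translates $D_n-g_k$ and a partition of $\mathbb{N}$ is a clean alternative to the paper's route, which instead proves $(3)\Rightarrow(1)$ directly by building the $k$-covers $(f_i-h)^{-1}(-\frac{1}{j},\frac{1}{j})$ from the given dense sets and then invoking the Fr\'echet--Urysohn property.
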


\begin{proof}

$(1)\Rightarrow(6)$ is immediate.

$(4)\Leftrightarrow(5)$. By Theorem 4.7.4 in \cite{mcnt}.

$(3)\Leftrightarrow(4)$. By Theorem 18 in \cite{cmkm}.

$(3)\Leftrightarrow(7)$. By Theorem 5 in \cite{koc1}.

$(3)\Leftrightarrow(8)$. By Theorem 7 in \cite{koc1}.

$(3)\Leftrightarrow(9)$. By Theorem \ref{th11}.

$(9)\Rightarrow(10)$ is immediate.

$(6)\Rightarrow(2)$. Let $D$ be a dense subset of $C_k(X)$. By
$S_{fin}(\mathcal{D},\mathcal{S})$, for
 sequence $(D_i : D_i=D$ and $i\in \mathbb{N} )$
there is a sequence $(K_{i}: i\in\mathbb{N})$ such that for each
$i$, $K_{i}$ is finite, $K_{i}\subset D_{i}$, and
$\bigcup_{i\in\mathbb{N}}K_{i}$ is a countable sequentially dense
subset of $C_k(X)$. It follows that $D$ is a sequentially dense
subset of $C_k(X)$.

$(2)\Rightarrow(4)$. Let $\mathcal{U}$ be an open $k$-cover of
$X$. Note that the set $\mathcal{D} :=\{ f\in C(X) :
f\upharpoonright (X\setminus U)\equiv 1$ for some
$U\in\mathcal{U}\}$ is dense in $C_k(X)$, hence, it is
sequentially dense. Take $f_n\in D$ such that $f_n\mapsto \bf{0}$.
Let $f_n\upharpoonright (X\setminus U_n)\equiv 1$ for some
$U_n\in\mathcal{U}$. Then $\{U_n: n\in \mathbb{N}\}$ is a
$\gamma_k$-subcover of $\mathcal{U}$, because of $f_n\mapsto
\bf{0}$. Hence, $X$ is a $\gamma_k$-set.

$(3)\Rightarrow(1)$. Let $(D_{i,j} : i,j\in \mathbb{N})$ be a
sequence of dense subsets of $C_k(X)$ and let $D=\{f_i : i\in
\mathbb{N}\}$ be a countable dense subset of $C_k(X)$.

For every $i,j\in \mathbb{N}$ consider
$\mathcal{U}_{i,j}=\{U_{h,i,j} :
U_{h,i,j}=(f_i-h)^{-1}(-\frac{1}{j},\frac{1}{j})$ for $h\in
D_{i,j}\}$. Note that $\mathcal{U}_{i,j}$ is an $k$-cover of $X$
for every $i,j\in \mathbb{N}$. Since $X$ a $\gamma'_k$-set, there
is a sequence $(U_{h(i,j),i,j} : i,j\in \mathbb{N})$ such that
$U_{h(i,j),i,j}\in \mathcal{U}_{i,j}$, and $\{U_{h(i,j),i,j}:
i,j\in\mathbb{N} \}$ is an element of $\Gamma_k$. Claim that
$\{h(i,j) : i,j\in\mathbb{N}\}$ is a dense subset of $C_k(X)$. Fix
$g\in C(X)$ and a base neighborhood $W=<g, A, \epsilon>$ of $g$,
where $A$ is a compact subset of $X$ and $\epsilon>0$. There are
$f_i\in D$  and $j\in \mathbb{N}$ such that
$<f_i,A,\frac{1}{j}>\subseteq W$. Since $\{U_{h(i,j),i,j}:
i,j\in\mathbb{N} \}$ is an element of $\Gamma_k$, there is $j'>j$
such that $A\subset U_{h(i,j'),i,j'}$, hence, $h(i,j')\in
<f_i,A,\frac{1}{j'}>\subseteq <f_i,A,\frac{1}{j}> \subseteq W$.

Since $C_k(X)$ is Fr$\acute{e}$chet-Urysohn, every dense subset of
$C_k(X)$ is sequentially dense. It follows that $\{h(i,j) :
i,j\in\mathbb{N}\}$ is sequentially dense.

$(10)\Rightarrow(3)$. Let $\{\mathcal{U}_i: i\in
\mathbb{N}\}\subset \mathcal{K}$ and let $D=\{d_j: j\in
\mathbb{N}\}$ be a countable dense subset of $C_k(X)$. Consider
$D_i=\{f_{K,U,i,j}\in C(X):$ such that $f_{K,U,i,j}\upharpoonright
K\equiv d_j$, $f_{K,U,i,j}\upharpoonright (X\setminus U)\equiv 1$
where $K\in \mathbb{K}(X)$, $K\subset U\in\mathcal{U}_i\}$ for
every $i\in \mathbb{N}$. Since $D$ is a dense subset of $C_k(X)$,
then $D_i$ is a dense subset of $C_k(X)$ for every $i\in
\mathbb{N}$. By (10), there is a set $\{f_{K(i),U(i),i,j(i)} :
i\in \mathbb{N}\}$ such that $f_{K(i),U(i),i,j(i)}\in D_i$ and
$\{f_{K(i), U(i), i, j(i)} : i\in \mathbb{N}\}\in \Gamma_{\bf 0}$.
Claim that a set $\{U(i) : i\in \mathbb{N}\}\in \Gamma_{k}$. Let
$K\in \mathbb{K}(X)$ and let $W=[K,(-\frac{1}{2},\frac{1}{2})]$ be
a base neighborhood of ${\bf 0}$. Since $\{f_{K(i), U(i), i, j(i)}
: i\in \mathbb{N}\}\in \Gamma_{\bf 0}$, there is $i'\in
\mathbb{N}$ such that $f_{K(i), U(i), i, j(i)}\in W$ for every
$i>i'$. It follows that $K\subset U(i)$ for every $i>i'$ and,
hence, $\{U(i) : i\in \mathbb{N}\}\in \Gamma_{k}$.

\end{proof}

Let $S\subset \mathbb{K}(X)$. An open cover $\mathcal{U}$ of a
space $X$ will be call:

$\bullet$ a $s$-cover if each $C\in S$ is contained in an element
of $\mathcal{U}$ and $X\notin \mathcal{U}$;

$\bullet$  a $\gamma_s$-cover if $\mathcal{U}$ is infinite,
$X\notin \mathcal{U}$, and for each $C\in S$ the set $\{U\in
\mathcal{U} : C\nsubseteq U\}$ is finite.

\begin{definition} Let $S\subset \mathbb{K}(X)$. A space $X$ is called a $\gamma_{s}$-set if each $s$-cover of $X$ contains a sequence which is a $\gamma_s$-cover of $X$.

\end{definition}

\medskip

\begin{definition} A space $X$ will be call a $\gamma^{\omega}_k$-set if each
countable cozero $k$-cover $\mathcal{U}$ of $X$ contains a set
$\{U_n : n\in \mathbb{N}\}$ which is a $\gamma_k$-cover of $X$.
\end{definition}
\medskip

For a mapping $f:X \mapsto Y$ we will denote by $f(k)=\{f(K) :
K\in \mathbb{K}(X)\}$.

\medskip

\begin{theorem}\label{th2} For a space $X$ with $iw(X)=\aleph_0$, the following statements are
equivalent:

\begin{enumerate}

\item  $C_k(X)$ satisfies
$S_{1}(\mathcal{D}^{\omega},\mathcal{S})$;

\item  $C_k(X)$ is strongly sequentially separable;

\item $X$ is a $\gamma^{\omega}_k$-set;

\item $X$ satisfies $S_{1}(\mathcal{K}_{cz}^{\omega},
\Gamma_{k})$;

\item for every a condensation (one-to-one continuous mapping) $f:
X \mapsto Y$ from the space $X$ on a
 separable metric space $Y$, the space $Y$ is $\gamma_{f(k)}$-set.

\end{enumerate}

\end{theorem}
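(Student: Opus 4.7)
The plan is a cycle $(1)\Rightarrow (2)\Rightarrow (3)\Leftrightarrow (4)\Rightarrow (1)$ together with a separate proof of $(3)\Leftrightarrow (5)$ by pullback and pushforward of covers under a condensation. The implication $(1)\Rightarrow (2)$ is immediate: for any fixed countable dense $D\subset C_k(X)$, applying $S_1(\mathcal{D}^\omega,\mathcal{S})$ to the constant sequence $(D,D,\dots)$ produces a sequentially dense subset of $D$, forcing $D$ itself to be sequentially dense. The equivalence $(3)\Leftrightarrow (4)$ proceeds in Gerlits--Nagy fashion: $(4)\Rightarrow(3)$ by applying $S_1$ to the constant sequence $(\mathcal{U},\mathcal{U},\dots)$ and using $X\notin\mathcal{U}$ to force infinitely many distinct selections, while $(3)\Rightarrow(4)$ is by passing to the countable cozero $k$-cover $\mathcal{W}=\{U_{1,k_1}\cap\cdots\cap U_{n,k_n}:n,k_i\in\mathbb{N}\}$ built from a given $(\mathcal{U}_n)$, extracting a $\gamma_k$-subcover via $(3)$, and standard diagonal relabeling to obtain $V_n\in\mathcal{U}_n$ with $\{V_n\}\in\Gamma_k$.

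For $(2)\Rightarrow(3)$, given a countable cozero $k$-cover $\mathcal{U}=\{U_n:n\in\mathbb{N}\}$, write $U_n=\{x\in X:h_n(x)>0\}$ with $h_n\in C(X,[0,1])$ and fix a countable dense $\{d_j\}\subset C_k(X)$ (available from Theorem~\ref{th31} since $iw(X)=\aleph_0$). I set
\[
\phi_{n,k}=1-(1-h_n)^k,\qquad f_{n,j,k}=1+\phi_{n,k}(d_j-1),\qquad D=\{f_{n,j,k}:n,j,k\in\mathbb{N}\}.
\]
Each $f_{n,j,k}$ equals $1$ on $X\setminus U_n$, while on any compact $K\subset U_n$ the value $h_n$ is bounded below by some $\delta>0$, so $\phi_{n,k}\to 1$ and $f_{n,j,k}\to d_j$ uniformly on $K$. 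Combined with density of $\{d_j\}$ and the $k$-cover property of $\mathcal{U}$, this gives that $D$ is dense in $C_k(X)$. By $(2)$ the countable set $D$ is sequentially dense, so some sequence $(f_{n_i,j_i,k_i})$ in $D$ converges to $\mathbf{0}$; since $f_{n_i,j_i,k_i}\equiv 1$ on $X\setminus U_{n_i}$, convergence to $\mathbf{0}$ forces $K\subset U_{n_i}$ eventually for every compact $K\subset X$, and $X\notin\mathcal{U}$ precludes $\{U_{n_i}\}$ from being finite, yielding the required $\gamma_k$-subcover.

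For $(4)\Rightarrow(1)$ I first establish the auxiliary $(4)\Rightarrow(2)$: for a countable dense $\{d_j\}\subset C_k(X)$ and any $g\in C_k(X)$, the families $\mathcal{U}_n=\{\{x:|d_j(x)-g(x)|<1/n\}:j\in\mathbb{N}\}$ are countable cozero $k$-covers, and applying $(4)$ produces a selection $d_{j(n)}\to g$, so $\{d_j\}$ is sequentially dense. Then, given a sequence $(D_n)$ of countable dense subsets with $D_n=\{d_{n,j}\}_j$, fix a countable dense $\{g_k\}\subset C_k(X)$, partition $\mathbb{N}=\bigsqcup_k A_k$ into infinite blocks, and for $n\in A_k$ form the countable cozero $k$-cover $\mathcal{U}_n=\{\{x:|d_{n,j}(x)-g_k(x)|<1/n\}:j\}$. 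Applying $(4)$ yields $f_n=d_{n,j(n)}\in D_n$ with $\{U_n\}\in\Gamma_k$; for each $k$, each compact $K$ and each $\epsilon>0$ there is $n\in A_k$ with $K\subset U_n$ and $1/n<\epsilon$, showing that $\{f_n\}$ meets every basic neighborhood $\la g_k,K,\epsilon\ra$ and hence is dense in $C_k(X)$, so by $(2)$ sequentially dense. Finally, $(3)\Leftrightarrow(5)$: any $f(k)$-cover of separable metric $Y$ is automatically countable (Lindel\"of) and cozero (open in a metric space), so by bijectivity of the condensation $f$ it pulls back to a countable cozero $k$-cover of $X$ whose $\gamma_k$-subcover pushes forward to a $\gamma_{f(k)}$-subcover; conversely, any countable cozero $k$-cover $\{U_n:n\in\mathbb{N}\}$ of $X$ with $U_n=\{h_n>0\}$ is realised as an $f(k)$-cover of $Y=f(X)$ via the condensation $f=(f_0,(h_n)_n):X\to Y_0\times[0,1]^{\mathbb{N}}$, where $f_0$ is any fixed condensation witnessing $iw(X)=\aleph_0$.

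The principal obstacle is $(2)\Rightarrow(3)$: constructing a countable dense subset of $C_k(X)$ adapted to the given cozero $k$-cover so that sequential convergence to $\mathbf{0}$ extracts a $\gamma_k$-subcover. The cozero structure $U_n=\{h_n>0\}$ is essential, entering through $\phi_{n,k}=1-(1-h_n)^k$, which uniformly approximates $\chi_{U_n}$ on compacts inside $U_n$ while vanishing off $U_n$; this permits the interpolation $1+\phi_{n,k}(d_j-1)$ between the constant function $1$ and an arbitrary $d_j$ to remain $\equiv 1$ off $U_n$ yet close to $d_j$ on compacts inside $U_n$, synchronising the sequential convergence with the cover structure.
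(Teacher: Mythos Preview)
Your argument is correct and considerably more self-contained than the paper's, which only writes out $(3)\Rightarrow(5)$ (via the same pull-back you use) and defers all remaining implications to ``the proofs of Theorem~\ref{th1} and Theorem~18 in \cite{cmkm}''. The substantive difference lies in $(2)\Rightarrow(3)$: the analogous step in Theorem~\ref{th1} uses the (generally uncountable) dense set $\{f\in C(X):f\upharpoonright(X\setminus U)\equiv 1\text{ for some }U\in\mathcal{U}\}$, which cannot be fed directly into strong sequential separability; your device $\phi_{n,k}=1-(1-h_n)^k$ and the interpolants $1+\phi_{n,k}(d_j-1)$ give an explicit \emph{countable} dense family adapted to the cover, and this is exactly where the cozero hypothesis on $\mathcal{U}$ earns its keep. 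Your $(5)\Rightarrow(3)$ via the augmented condensation $(f_0,(h_n)_n)$ is likewise an explicit construction not spelled out in the paper. Two small caveats: in $(3)\Rightarrow(5)$ an $f(k)$-cover of separable metric $Y$ is not ``automatically countable''---what you need (and what the paper states) is a countable sub-$f(k)$-cover, obtained from second countability rather than mere Lindel\"ofness; and the ``standard diagonal relabeling'' in $(3)\Rightarrow(4)$ is indeed the Gerlits--Nagy argument (Theorem~18 of \cite{cmkm} in the paper's citation), but one must take care that the levels $n$ of the selected intersections are unbounded---this requires a slightly more careful recursion than a single application of $(3)$ to $\mathcal{W}$.
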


\begin{proof}

$(3)\Rightarrow(5)$. Let $f$ be a condensation $f: X \mapsto Y$
from the space $X$ on a
 separable metric space $Y$. If $\mu$ is a $f(k)$-cover of $Y$,
 then there is $\mu'\subset \mu$ such that $\mu'$ is a $f(k)$-cover of $Y$ and $|\mu'|=\aleph_0$.
The family $f^{-1}(\mu')=\{f^{-1}(V) : V\in \mu'\}$ is a countable
co-zero $k$-cover of  $X$. By the argument that $X$ is a
$\gamma^{\omega}_k$-set, we have that $Y$ is $\gamma_{f(k)}$-set.

The remaining implications follow from the proofs of Theorem
\ref{th1} and Theorem 18 in \cite{cmkm}.

\end{proof}

\begin{corollary} For a separable metrizable space $X$,  the following statements are
equivalent:
\begin{enumerate}

\item  $C_k(X)$ satisfies $S_{1}(\mathcal{D},\mathcal{S})$;

\item   Every dense subset of $C_k(X)$ is sequentially dense;

\item  $C_k(X)$ is strongly sequentially separable;

\item $C_k(X)$ is a Fr$\acute{e}$chet-Urysohn;

\item $C_k(X)$ is metrizable and separable;

\item $X$ satisfies $S_{1}(\mathcal{K},\Gamma_{k})$;

\item $X$ satisfies $S_{1}(\mathcal{K}, \mathcal{K})$;

\item $X$ satisfies $S_{fin}(\mathcal{K}, \mathcal{K})$;

\item $X$ is a hemicompact.
\end{enumerate}

\end{corollary}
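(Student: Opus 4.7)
The plan is to build a cycle of implications using Theorems \ref{th1} and \ref{th2} as the main engine, supplemented by the classical theorems of Arens and Noble for the metrizability clause. Since every separable metrizable space has $iw(X)=\aleph_0$, the hypotheses of both theorems are automatically satisfied.

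Applying Theorem \ref{th1} to $X$ yields $(1)\Leftrightarrow(2)\Leftrightarrow(4)\Leftrightarrow(6)$ immediately: items $(1)$, $(2)$, $(5)$, and $(3)/(4)$ of that theorem translate respectively to our $(1)$, $(2)$, $(4)$, and $(6)$. For $(3)\Leftrightarrow(6)$, I would invoke Theorem \ref{th2}, whose clause $(3)$ identifies strong sequential separability of $C_k(X)$ with $X$ being a $\gamma^{\omega}_k$-set. Separable metric spaces are perfectly normal, so every open set is cozero, and the Vietoris hyperspace $\mathbb{K}(X)$ is again separable metric (hence Lindel\"of), so every $k$-cover contains a countable sub-$k$-cover. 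Together these reductions collapse $\gamma^{\omega}_k$-set to the ordinary $\gamma_k$-set property, yielding $(3)\Leftrightarrow(6)$. The clause $(5)\Leftrightarrow(9)$ is Arens's classical theorem (for Tychonoff $X$, $C_k(X)$ is first-countable iff $X$ is hemicompact) combined with Birkhoff--Kakutani (first countability equals metrizability for topological groups) and Noble's Theorem \ref{th31}, which upgrades $iw(X)=\aleph_0$ to separability of $C_k(X)$.

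For the covering block $(6)\Leftrightarrow(7)\Leftrightarrow(8)\Leftrightarrow(9)$, the inclusions $\Gamma_k\subseteq\mathcal{K}$ and $S_1\Rightarrow S_{fin}$ give $(6)\Rightarrow(7)\Rightarrow(8)$ for free. The implication $(9)\Rightarrow(6)$ is a one-line exhaustion: given a hemicompact witness $\{K_n\}$ and $k$-covers $(\mathcal{U}_n)$, pick $U_n\in\mathcal{U}_n$ with $K_n\subseteq U_n$; then $\{U_n\}$ is a $\gamma_k$-cover since every compact subset of $X$ lies eventually inside $K_n$. The principal obstacle is $(8)\Rightarrow(9)$, the passage from a Menger-type $k$-cover selection principle to the rigid structural property of hemicompactness. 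I would attack it via the countable base of $X$, forming a canonical countable $k$-cover by finite unions of basic open sets and then applying $S_{fin}(\mathcal{K},\mathcal{K})$ to shifted tails of that cover to produce a cofinal sequence of compact sets. This final implication is where the hypothesis of separable metrizability is genuinely used beyond the weaker $iw(X)=\aleph_0$ assumption of Theorems \ref{th1} and \ref{th2}.
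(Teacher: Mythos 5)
Your overall architecture coincides with the paper's: the paper's entire proof is a citation of Theorem \ref{th1} together with Theorem 6 of \cite{cmkm}, and the parts of your argument outside the covering block --- Theorem \ref{th1} for $(1)\Leftrightarrow(2)\Leftrightarrow(4)\Leftrightarrow(6)$, Theorem \ref{th2} plus $k$-Lindel\"ofness and perfect normality of separable metrizable spaces for $(3)\Leftrightarrow(6)$, McCoy--Ntantu/Noble for $(5)\Leftrightarrow(9)$, the trivial implications $(6)\Rightarrow(7)\Rightarrow(8)$, and the exhaustion argument for $(9)\Rightarrow(6)$ --- are all correct and are exactly what those citations deliver.

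The gap is your treatment of $(8)\Rightarrow(9)$, which is the only step carrying real content beyond Theorem \ref{th1}. The sketch ``form a countable $k$-cover from finite unions of basic open sets and apply $S_{fin}(\mathcal{K},\mathcal{K})$ to its shifted tails to produce a cofinal sequence of compact sets'' cannot work: selecting finitely many members from $k$-covers consisting of \emph{open} sets returns only a countable family of open sets, and for a fixed countable $k$-cover the selection is trivially satisfiable (take $\mathcal{F}_n=\{U_n\}$). Every separable metrizable space --- including $\mathbb{Q}$, which is not hemicompact --- passes this restricted test, so no such procedure can certify hemicompactness; to convert the selected open sets into compacta you would need their closures to be compact, i.e.\ local compactness, which is precisely the nontrivial half of hemicompactness here. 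The actual argument (Proposition 5/Theorem 6 in \cite{cmkm}, which the paper simply cites) proceeds by contradiction through local compactness: if $x$ has no compact neighborhood, fix a decreasing base $(V_n)$ at $x$, choose an infinite closed discrete $D_n\subseteq V_n$ (possible since $\overline{V_n}$ is a non-compact metric space), and form the $k$-covers $\mathcal{U}_n=\{X\setminus(D_n\setminus F): F$ a finite subset of $D_n\}$; any finite selections $\mathcal{F}_n\subseteq\mathcal{U}_n$ miss some point $d_n\in D_n$, and the compact set $\{x\}\cup\{d_n: n\in\mathbb{N}\}$ (compact because $d_n\to x$) shows $\bigcup_n\mathcal{F}_n$ is not a $k$-cover. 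Local compactness together with Lindel\"ofness of a separable metrizable space then yields hemicompactness. You should either reproduce this argument or cite \cite{cmkm} for it, as the paper does.
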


\begin{proof} By Theorem \ref{th1} and  Theorem 6
in \cite{cmkm}.

\end{proof}

A space $X$ is called a {\it $k$-Lindel$\ddot{o}$f space} if for
each open $k$-cover $\mathcal{U}$ of $X$ there is a
$\mathcal{V}\subseteq \mathcal{U}$ such that $\mathcal{V}$ is
countable and $\mathcal{V}\in \mathcal{K}$. Each
$k$-Lindel$\ddot{o}$f space is Lindel$\ddot{o}$f, so normal, too.

\begin{lemma}(\cite{mcnt})\label{lem1}
$C_k(X)$ has countable tightness iff $X$ is $k$-Lindel$\ddot{o}$f.
\end{lemma}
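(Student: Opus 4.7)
The plan is to exploit the standard duality between compact subsets of $X$ (which parametrize basic neighborhoods of $\mathbf{0}$ in $C_k(X)$) and open $k$-covers. Because $C_k(X)$ is a topological group, tightness is homogeneous, so it suffices to analyze the point $\mathbf{0}$.

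First I would prove the implication $X$ is $k$-Lindel\"of $\Rightarrow$ $C_k(X)$ has countable tightness. Take $A\subseteq C_k(X)$ with $f\in \overline{A}$; by translation assume $f=\mathbf{0}$. For each $n\in\mathbb{N}$ form $\mathcal{U}_n=\{g^{-1}(-1/n,1/n):g\in A\}$. The hypothesis $\mathbf{0}\in\overline{A}$ says that every compact $K\subseteq X$ meets a basic neighborhood $[K,(-1/n,1/n)]$ nontrivially, i.e.\ lies inside some member of $\mathcal{U}_n$, making $\mathcal{U}_n$ a $k$-cover (discarding the trivial case $X\in\mathcal{U}_n$, which gives a one-element witness immediately). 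Apply $k$-Lindel\"ofness to extract a countable $k$-subcover corresponding to functions $\{g_{n,m}:m\in\mathbb{N}\}\subseteq A$. The countable set $B=\{g_{n,m}:n,m\in\mathbb{N}\}$ then satisfies $\mathbf{0}\in\overline{B}$: given $[K,(-\varepsilon,\varepsilon)]$, pick $n$ with $1/n<\varepsilon$ and then $m$ with $K\subseteq g_{n,m}^{-1}(-1/n,1/n)$.

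For the converse I would dualize. Given an open $k$-cover $\mathcal{U}$ of $X$, for each pair $(U,K)$ with $U\in\mathcal{U}$, $K\in\mathbb{K}(X)$, and $K\subseteq U$, use Urysohn's lemma (applied to the disjoint closed sets $K$ and $X\setminus U$, valid since $X$ is Tychonoff and $K$ is compact) to pick $f_{U,K}\in C(X)$ with $f_{U,K}\!\upharpoonright K\equiv 0$ and $f_{U,K}\!\upharpoonright(X\setminus U)\equiv 1$. Let $A=\{f_{U,K}\}$. Every basic neighborhood $[K,(-\varepsilon,\varepsilon)]$ of $\mathbf{0}$ meets $A$ (choose any $U\in\mathcal{U}$ with $K\subseteq U$), so $\mathbf{0}\in\overline{A}$. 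Countable tightness delivers a countable $\{f_{U_n,K_n}:n\in\mathbb{N}\}\subseteq A$ with $\mathbf{0}$ in its closure. For any compact $K\subseteq X$, the neighborhood $[K,(-1,1)]$ must contain some $f_{U_n,K_n}$, and this forces $K\subseteq U_n$ (otherwise $f_{U_n,K_n}$ would take the value $1$ somewhere on $K$). Hence $\{U_n:n\in\mathbb{N}\}\subseteq\mathcal{U}$ is the desired countable $k$-subcover.

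The main obstacle is not conceptual but organizational: one must make sure the auxiliary families genuinely qualify as $k$-covers under the convention $X\notin\mathcal{U}$ (handled by isolating the trivial cases in the forward direction) and that the test functions $f_{U,K}$ can be produced for arbitrary Tychonoff $X$ with the correct separation properties (this is precisely where compactness of $K$ and the Tychonoff hypothesis enter, via Urysohn's lemma). Once these points are settled the rest is routine manipulation with basic open sets $[K,(-\varepsilon,\varepsilon)]$.
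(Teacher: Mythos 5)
Your argument is correct and is essentially the standard duality proof that the paper imports from McCoy--Ntantu \cite{mcnt} without reproving: homogeneity of the topological group $C_k(X)$ reduces tightness to the point $\mathbf{0}$, where the basic neighborhoods $[K,(-1/n,1/n)]$ translate back and forth into open $k$-covers exactly as you describe. The only cosmetic point is that the separating functions $f_{U,K}$ come not from Urysohn's lemma (which requires normality) but from the fact that in a Tychonoff space a compact set and a disjoint closed set are completely separated --- precisely the compactness-plus-Tychonoff justification you already invoke, so nothing is lost.
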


By Theorem \ref{th1}, Theorem \ref{th2} and Lemma \ref{lem1} we
have

\begin{theorem} For a Tychonoff space $X$ with $iw(X)=\aleph_0$ the following statements are
equivalent:

\begin{enumerate}

\item $C_k(X)$ is Fr$\acute{e}$chet-Urysohn;

\item $C_k(X)$ is strongly sequentially separable and has
countable tightness;

\item  $X$ satisfies $S_{1}(\mathcal{K}^{\omega},\Gamma_{k})$ and
is $k$-Lindel$\ddot{o}$f;

\item   Every dense subset of $C_k(X)$ contains a countable
sequentially dense subset of $C_k(X)$.

\end{enumerate}

\end{theorem}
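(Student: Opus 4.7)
My strategy pivots on the three characterizations already in place: by Theorem~\ref{th1}, statement~(1) is equivalent to $X$ being a $\gamma_k$-set, equivalently $X\in S_1(\mathcal{K},\Gamma_k)$; by Theorem~\ref{th2}, strong sequential separability of $C_k(X)$ is equivalent to $X\in S_1(\mathcal{K}_{cz}^{\omega},\Gamma_k)$; and by Lemma~\ref{lem1}, countable tightness of $C_k(X)$ is equivalent to $k$-Lindel\"ofness of $X$. Using these three bridges, I would prove (1)$\Leftrightarrow$(3), (1)$\Leftrightarrow$(2), and (1)$\Leftrightarrow$(4), each a short combination of the above.

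For (1)$\Leftrightarrow$(3): $S_1(\mathcal{K},\Gamma_k)$ trivially implies both $S_1(\mathcal{K}^{\omega},\Gamma_k)$ and $k$-Lindel\"ofness, since each extracted $\gamma_k$-cover is in particular a countable $k$-subcover. Conversely, given any $k$-cover, $k$-Lindel\"ofness extracts a countable $k$-subcover on which $S_1(\mathcal{K}^{\omega},\Gamma_k)$ acts. For (1)$\Rightarrow$(2): Fr\'echet--Urysohn implies countable tightness, and combined with separability of $C_k(X)$ from Theorem~\ref{th31} (Noble), every dense subset is sequentially dense, hence $C_k(X)$ is strongly sequentially separable. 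For (2)$\Rightarrow$(1): Theorem~\ref{th2} yields $X\in S_1(\mathcal{K}_{cz}^{\omega},\Gamma_k)$ and Lemma~\ref{lem1} yields $k$-Lindel\"ofness, and I would then lift the cozero-restricted principle to arbitrary $k$-covers. Given a $k$-cover $\mathcal{U}$, enclose each compact subset of each element of $\mathcal{U}$ in a cozero subset of that element via Urysohn (using normality of the $k$-Lindel\"of, hence Lindel\"of, space), extract a countable cozero $k$-subcover $\mathcal{W}$ by $k$-Lindel\"ofness, apply $S_1(\mathcal{K}_{cz}^{\omega},\Gamma_k)$ to get a $\gamma_k$-subcover of $\mathcal{W}$, and lift each chosen cozero set back to an element of $\mathcal{U}$ containing it.

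For (1)$\Rightarrow$(4): given a dense $D\subseteq C_k(X)$, fix a countable dense $\{f_n\}\subseteq C_k(X)$ from Theorem~\ref{th31}; countable tightness coming from Fr\'echet--Urysohn supplies countable $D_n\subseteq D$ with $f_n\in\overline{D_n}$, and $\bigcup_n D_n\subseteq D$ is a countable dense subset, hence sequentially dense by Fr\'echet--Urysohn. For (4)$\Rightarrow$(1), I imitate the argument used in the proof of (2)$\Rightarrow$(4) of Theorem~\ref{th1}: for a $k$-cover $\mathcal{U}$, the set $D=\{f\in C(X):f\upharpoonright(X\setminus U)\equiv 1\text{ for some }U\in\mathcal{U}\}$ is dense in $C_k(X)$; by (4) it contains a countable sequentially dense $\{f_n\}$, and a subsequence $(f_{n_k})$ converging to $\mathbf{0}$ witnesses that the corresponding $\{U_{n_k}\}$ is a $\gamma_k$-subcover of $\mathcal{U}$. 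The main obstacle I expect is the cozero-to-open refinement step in (2)$\Rightarrow$(1), where $k$-Lindel\"ofness combined with normality is needed to upgrade the cozero-restricted selection principle of Theorem~\ref{th2} to the unrestricted one appearing in Theorem~\ref{th1}; the remaining implications then fall out of the three cited results in a routine manner.
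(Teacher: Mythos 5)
Your proposal is correct and follows exactly the route the paper intends: the paper's entire proof is the citation ``By Theorem~\ref{th1}, Theorem~\ref{th2} and Lemma~\ref{lem1}'', and you have simply supplied the routine gluing details (the cozero refinement under $k$-Lindel\"ofness, the countable-tightness extraction for (1)$\Rightarrow$(4), and the standard $k$-cover-to-dense-set translation for (4)$\Rightarrow$(1)), all of which check out. No substantive difference from the paper's approach.
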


\bigskip
In Doctoral Dissertation, A.J. March considered the following
problem (Problem 117 in \cite{ma}):  Is it possible to find a
space $X$ such that $C_k(X)$ is strongly sequentially separable
but $C_k(X)^2$ is not strongly sequentially separable ?

We get a negative answer to this question.

\begin{proposition}\label{pr1} Suppose $X$ has property
$S_{1}(\mathcal{K}_{cz}^{\omega},\Gamma_k)$. Then $X\bigsqcup X$
has property $S_{1}(\mathcal{K}_{cz}^{\omega},\Gamma_k)$.
\end{proposition}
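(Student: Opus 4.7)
The plan is to reduce the $S_1(\mathcal{K}_{cz}^{\omega},\Gamma_k)$ problem on $Y:=X\sqcup X$ to a single instance of the same principle on $X$ itself, via a diagonal construction on compact sets. Identifying both summands with $X$, every cozero subset $U$ of $Y$ decomposes uniquely as $U=A\sqcup B$ with $A,B$ cozero in $X$, and every compact $L\subseteq Y$ decomposes as $L=L_0\sqcup L_1$ with $L_0,L_1$ compact in $X$.

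Given a sequence $(\mathcal{U}_n:n\in\mathbb{N})$ of countable cozero $k$-covers of $Y$, enumerate $\mathcal{U}_n=\{U_{n,m}=A_{n,m}\sqcup B_{n,m}:m\in\mathbb{N}\}$ and set $\mathcal{W}_n:=\{A_{n,m}\cap B_{n,m}:m\in\mathbb{N}\}$, a countable family of cozero subsets of $X$ (intersection of two cozero sets is cozero). The crucial verification is that each $\mathcal{W}_n$ belongs to $\mathcal{K}_{cz}^{\omega}$. For a compact $K\subseteq X$, embed it diagonally as $K\sqcup K\subseteq X_0\sqcup X_1=Y$, which is compact in $Y$; hence some $m$ satisfies $K\sqcup K\subseteq U_{n,m}$, which forces $K\subseteq A_{n,m}$ and $K\subseteq B_{n,m}$, so $K\subseteq A_{n,m}\cap B_{n,m}$. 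Moreover $X\notin\mathcal{W}_n$, since $A_{n,m}\cap B_{n,m}=X$ would require $U_{n,m}=Y$, contradicting $Y\notin\mathcal{U}_n$.

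Applying the hypothesis $S_1(\mathcal{K}_{cz}^{\omega},\Gamma_k)$ of $X$ to $(\mathcal{W}_n)$, I extract indices $m_n$ such that $\{W_n:n\in\mathbb{N}\}$ is a $\gamma_k$-cover of $X$, where $W_n:=A_{n,m_n}\cap B_{n,m_n}$. Lift the selection by setting $U_n:=U_{n,m_n}\in\mathcal{U}_n$. For any compact $L=L_0\sqcup L_1\subseteq Y$, the union $K:=L_0\cup L_1$ is compact in $X$, so cofinitely many $n$ satisfy $K\subseteq W_n$, which implies $L_0\subseteq A_{n,m_n}$ and $L_1\subseteq B_{n,m_n}$, and hence $L\subseteq U_n$. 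Non-triviality $U_n\neq Y$ is immediate from $Y\notin\mathcal{U}_n$, and infiniteness of $\{U_n\}$ transfers from that of $\{W_n\}$ because distinct values of $W_n$ force distinct values of $U_n$ (the decomposition $U=A\sqcup B$ determines $A\cap B$ under the identification). Therefore $\{U_n:n\in\mathbb{N}\}\in\Gamma_k(Y)$.

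The conceptual core of the argument is the definition of $\mathcal{W}_n$: once one notices that the diagonal compact sets $K\sqcup K\subseteq Y$ force both halves $A_{n,m}$ and $B_{n,m}$ to contain $K$ simultaneously for a common index $m$, the rest reduces to bookkeeping. I do not anticipate any serious obstacle beyond isolating this diagonal trick.
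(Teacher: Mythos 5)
Your proof is correct and uses the same key construction as the paper: identify the two summands with $X$, intersect the two halves $A_{n,m}=U_{n,m}\cap X_1$ and $B_{n,m}=U_{n,m}\cap X_2$, and use the diagonal compact sets $K\sqcup K$ to see that the resulting family is again a countable cozero $k$-cover of $X$. The only (harmless) difference is that you run the selection argument directly on the sequence $(\mathcal{W}_n)$, whereas the paper treats a single cover and invokes the known equivalence of $S_1(\mathcal{K}_{cz}^{\omega},\Gamma_k)$ with the corresponding single-cover ($\gamma_k$-subcover) property.
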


\begin{proof} Let $\mathcal{U}=\{U_i: i\in \mathbb{N} \}$ be a countable $k$-cover of $X\bigsqcup X$ by
cozero sets. Let $X\bigsqcup X=X_1\bigsqcup X_2$ where $X_i=X$ for
$i=1,2$. Consider $\mathcal{V}_1=\{U^1_i=U_i\bigcap X_1 :
X_1\setminus U_i\neq\emptyset, i\in \mathbb{N}\}$ and
$\mathcal{V}_2=\{U^2_i=U_i\bigcap X_2 : X_2\setminus
U_i\neq\emptyset, i\in \mathbb{N} \}$ as families of subsets of
the space $X$. Define $\mathcal{V}:=\{ U^1_i\bigcap U^2_i :
U^1_i\in \mathcal{V}_1$ and $U^2_i\in \mathcal{V}_2 \}$. Note that
$\mathcal{V}$ is a countable $k$-cover  of $X$ by cozero sets. By
Theorem 18 in \cite{cmkm}, there is $\{U^1_{i_n}\bigcap U^2_{i_n}
:n\in \mathbb{N}\}\subset \mathcal{V}$ such that
$\{U^1_{i_n}\bigcap U^2_{i_n} :n\in \mathbb{N}\}$ is a
$\gamma_k$-cover of $X$. It follows that $\{U_{i_n}:n\in
\mathbb{N} \}$ is a $\gamma_k$-cover of $X\bigsqcup X$.

\end{proof}

\begin{theorem} For Tychonoff space $X$ the following statements are
equivalent:
\begin{enumerate}

\item  $C_k(X)$ is strongly sequentially separable;

\item  $(C_k(X))^n$ is strongly sequentially separable for each
$n\in \mathbb{N}$.

\end{enumerate}
\end{theorem}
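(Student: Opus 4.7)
The plan is to reduce the statement to Proposition \ref{pr1} via the standard identification $C_k(X_1 \bigsqcup X_2) \cong C_k(X_1) \times C_k(X_2)$ and then use Theorem \ref{th2} to translate back and forth between $C_k$-level properties and covering properties of $X$.

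The direction $(2) \Rightarrow (1)$ is immediate by taking $n=1$. For $(1) \Rightarrow (2)$, I would first observe that strong sequential separability of $C_k(X)$ forces $C_k(X)$ to be separable, hence (by Theorem \ref{th31}) $iw(X) = \aleph_0$. Then Theorem \ref{th2} gives that $X$ satisfies $S_1(\mathcal{K}_{cz}^{\omega}, \Gamma_k)$. Next I would iterate Proposition \ref{pr1} by induction on $n$: letting $Y_n = \bigsqcup_{i=1}^n X$, the inductive step writes $Y_{n+1} = Y_n \bigsqcup X$ and applies Proposition \ref{pr1} (which, inspecting its proof, works for two spaces sharing the property rather than only for two copies of the same space, or alternatively one groups copies to apply it in the form stated). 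This yields that $Y_n$ has $S_1(\mathcal{K}_{cz}^{\omega}, \Gamma_k)$ for every $n$. Since $iw$ is clearly preserved under finite disjoint unions, $iw(Y_n) = \aleph_0$ as well.

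Finally I would invoke Theorem \ref{th2} in the reverse direction applied to $Y_n$: since $Y_n$ has $iw(Y_n) = \aleph_0$ and is a $\gamma_k^\omega$-set, $C_k(Y_n)$ is strongly sequentially separable. Combining with the canonical homeomorphism $C_k(Y_n) \cong (C_k(X))^n$ (which holds because a continuous function on a disjoint union is determined by its restrictions to the summands and the compact subsets of $Y_n$ decompose into finite unions of compacta from the summands), we conclude that $(C_k(X))^n$ is strongly sequentially separable.

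The routine verifications (the $C_k$ disjoint-union homeomorphism, preservation of $iw$ under finite disjoint unions) are standard. The only mildly subtle point is the inductive step: Proposition \ref{pr1} is stated for $X \bigsqcup X$, but the argument given there only uses that the two summands each have the property $S_1(\mathcal{K}_{cz}^{\omega}, \Gamma_k)$, so it adapts verbatim to $Y_n \bigsqcup X$. I expect this to be the only step requiring any real comment in the written proof.
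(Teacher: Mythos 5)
Your overall strategy is exactly the paper's: the published proof consists precisely of the citation of Theorem~\ref{th2}, of the proposition that $S_{1}(\mathcal{K}_{cz}^{\omega},\Gamma_k)$ is preserved by the sum $X\bigsqcup X$ (Proposition~\ref{pr1}), and of the identification $C_k(X\bigsqcup X)=C_k(X)\times C_k(X)$; you correctly supply the routine pieces left implicit there (strong sequential separability gives separability, hence $iw(X)=\aleph_0$ by Theorem~\ref{th31}; $iw$ is preserved by finite sums; $C_k(\bigsqcup_{i=1}^{n}X)\cong (C_k(X))^{n}$).

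The one claim I would not let stand is that the proof of the $X\bigsqcup X$ proposition ``only uses that the two summands each have the property'' and therefore ``adapts verbatim to $Y_n\bigsqcup X$.'' It does not: the entire content of that proof is the diagonal trick of forming $U_i^1\cap U_i^2$, where $U_i^1=U_i\cap X_1$ and $U_i^2=U_i\cap X_2$ are both regarded as subsets of the \emph{same} underlying space $X$, so that the selection hypothesis is invoked once, for a single cover of $X$. For two genuinely different summands there is no common space in which to intersect, and applying the hypothesis to each summand separately yields two subsequences of indices with no reason to be compatible --- this is essentially the difficulty behind Problem 21 of \cite{cmkm} and March's Problem 116, to which the paper only gives partial answers under extra hypotheses. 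Fortunately, in the present application all summands are literally copies of $X$, and either of your fallbacks can be made to work: (a) generalize the diagonal trick directly, taking $\bigcap_{j=1}^{n}(U_i\cap X_j)$ inside $X$ to show in one step that $\bigsqcup_{i=1}^{n}X$ satisfies $S_{1}(\mathcal{K}_{cz}^{\omega},\Gamma_k)$; or (b) iterate the proposition as stated on sums of the form $Z\bigsqcup Z$ to reach $2^k$ copies and then descend to $n$ copies, using either that $S_{1}(\mathcal{K}_{cz}^{\omega},\Gamma_k)$ passes to clopen subspaces (adjoin the complement of the clopen piece to each member of a given cozero $k$-cover), or that strong sequential separability of $(C_k(X))^{2^k}$ passes to the factor $(C_k(X))^{n}$ by crossing a countable dense subset with one of the complementary factor and projecting. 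With either repair the argument is complete.
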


\begin{proof} By Theorem \ref{th2}, Proposition \ref{pr1} and the argument
that $C_k(X\bigsqcup X)=C_k(X)\times C_k(X)$.

\end{proof}

\medskip

A.J. March  considered the problem (Problem 116 in \cite{ma}): Is
it possible to find spaces $X$, $Y$ such that $C_k(X)$ and
$C_k(Y)$ are strongly sequentially separable but $C_k(X)\times
C_k(Y)$ is not strongly sequentially separable ?

\medskip

 A.Miller constructed the following example \cite{mill}.

\begin{example} There exist disjoint subsets of the plane $X$ and $Y$ such that
both $X$ and $Y$ are $\gamma_k$-sets but $X\cup Y$ is not. Let $X$
be the open disk of radius one, i.e., $X=\{(x, y) : x^2 + y^2 < 1
\}$, and $Y$ be any singleton on the boundary of $X$, e.g., $Y =
\{(1, 0)\}$.
\end{example}

Thus, we have the example of the subsets of the plane $X$ and $Y$
such that $C_k(X)$ and $C_k(Y)$ are strongly sequentially
separable, but $C_k(X\cup Y)$ is not.

\medskip

Note that (in contrast to the $C_p$-theory)  $C_k(X\cup Y)\neq
C_k(X)\times C_k(Y)$.

\medskip

In \cite{cmkm}, the authors considered the next problem  (Problem
21 in \cite{cmkm}) :  Is the class of $\gamma_k$-sets closed for
finite unions ?

\medskip

A particular answer to this problem and March's problem is the
following

\begin{theorem} Suppose that $X$ and $Y$ are $\gamma_k$-sets, $iw(X)=iw(Y)=\aleph_0$ and $Y$ is first-countable.
Then $X\bigsqcup Y$ is a $\gamma_k$-set.
\end{theorem}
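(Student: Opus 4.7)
The plan is to use the characterization from Theorem~\ref{th1}. Since $iw(X)=iw(Y)=\aleph_0$ implies $iw(X\sqcup Y)=\aleph_0$, that theorem says $X\sqcup Y$ is a $\gamma_k$-set if and only if $C_k(X\sqcup Y)$ is Fr\'echet--Urysohn. Using the canonical homeomorphism $C_k(X\sqcup Y)\cong C_k(X)\times C_k(Y)$, it suffices to prove that this product is Fr\'echet--Urysohn.

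By Theorem~\ref{th1}, clause~(9), both $C_k(X)$ and $C_k(Y)$ satisfy $S_1(\Omega_{\bf 0},\Gamma_{\bf 0})$, i.e.\ they are strongly Fr\'echet--Urysohn at every point (translation by a fixed function is a homeomorphism of the topological group $C_k(\cdot)$). In general the product of two (even strongly) Fr\'echet--Urysohn spaces can fail to be Fr\'echet--Urysohn, so the extra assumption that $Y$ is first-countable must do real work. I intend to use it to upgrade the second factor to a first-countable space.

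Concretely, I would first show that $Y$ is hemicompact. Then $C_k(Y)$ is metrizable (as a first-countable topological group), and the product $C_k(X)\times C_k(Y)$ is Fr\'echet--Urysohn by the standard argument: for $(f_0,g_0)\in\overline{A}$, choose a decreasing countable neighbourhood base $\{V_n\}$ of $g_0$ in $C_k(Y)$, form the decreasing sets $A_n=\pi_{C_k(X)}(A\cap(C_k(X)\times V_n))$, observe that $f_0\in\overline{A_n}$ for every $n$, apply the strong Fr\'echet--Urysohn property of $C_k(X)$ to extract $f_n\in A_n$ with $f_n\to f_0$, lift to $(f_n,g_n)\in A$ with $g_n\in V_n$, and conclude that $(f_n,g_n)\to(f_0,g_0)$. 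Since $iw(X\sqcup Y)=\aleph_0$, Theorem~\ref{th1} then delivers that $X\sqcup Y$ is a $\gamma_k$-set.

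The main obstacle is establishing hemicompactness of $Y$ from the three hypotheses (first-countable, $\gamma_k$-set, $iw(Y)=\aleph_0$). The approach I would try is to use the condensation $f\colon Y\to Z$ onto a separable metric space $Z$ furnished by $iw(Y)=\aleph_0$: on each compact $K\subset Y$, the map $f\upharpoonright K$ is a homeomorphism onto $f(K)\subset Z$, so compact subsets of $Y$ are second-countable, and the countable base $\{B_n\}$ of $Z$ pulls back to a countable family $\{\bigcup_{n\in F}f^{-1}(B_n):F\subset\mathbb{N}\text{ finite}\}$ of open sets of $Y$ that dominates every compact subset of $Y$ (in the sense that every compact sits in some member). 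Applying the $\gamma_k$-selection principle to this countable $k$-cover and exploiting first-countability of $Y$ to replace open members by compact ones should yield a countable cofinal family of compact subsets of $Y$, i.e.\ hemicompactness; this is essentially the mechanism behind the separable-metric case of Corollary~\ref{th2} preceding this theorem, which I would adapt to the merely first-countable setting.
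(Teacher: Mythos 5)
Your overall strategy is sound and it does reach the conclusion, but it runs along a different track from the paper. The paper works with strong sequential separability: by Theorem~\ref{th2}, $C_k(X)$ and $C_k(Y)$ are strongly sequentially separable; it then cites McCoy's result that a first-countable space in $S_1(\mathcal{K},\Gamma_k)$ is hemicompact, so $C_k(Y)$ is separable metrizable, and invokes Theorem~9 of Gartside--Lo--Marsh (that the product of a strongly sequentially separable space with a strongly sequentially separable metrizable space is strongly sequentially separable) before translating back via Theorem~\ref{th2}. You instead work with the Fr\'echet--Urysohn characterization of Theorem~\ref{th1} and prove the product statement yourself by the Michael-type argument (strongly Fr\'echet--Urysohn $\times$ first countable is Fr\'echet--Urysohn), which is correct as you describe it. The two proofs share the same pivot --- $Y$ hemicompact, hence $C_k(Y)$ metrizable --- but your route has the advantage of staying entirely within Theorem~\ref{th1}, which characterizes $\gamma_k$-sets directly, whereas the paper's Theorem~\ref{th2} strictly speaking characterizes $\gamma_k^{\omega}$-sets; your version is also self-contained where the paper outsources the product step to \cite{glm}.

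The one soft spot is your sketch of why $Y$ is hemicompact. Pulling back a countable base along the condensation $f\colon Y\to Z$ does produce a countable cozero $k$-cover and hence a countable open $\gamma_k$-cover of $Y$, but the final step --- ``replace open members by compact ones'' --- does not follow from first countability by any direct manipulation of that cover: an open $\gamma_k$-cover need not contain, or even admit, compact sets cofinal in $\mathbb{K}(Y)$ unless $Y$ is locally compact. The actual mechanism (McCoy \cite{mc}; Proposition~5 and Theorem~6 in \cite{cmkm}) is that if a first-countable $Y$ fails to be locally compact at some point, a copy of the sequential fan embeds into $C_k(Y)$, contradicting $S_1(\mathcal{K},\Gamma_k)$; local compactness together with the Lindel\"of property then gives hemicompactness. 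You should either reproduce that argument or simply cite the result, as the paper does; with that repair your proof is complete.
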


\begin{proof} By Theorem \ref{th2}, $C_k(X)$ and $C_k(Y)$ are strongly sequentially
separable. Notice that each hemicompact space belong to the class
$S_1(\mathcal{K},\Gamma_k)$, and the converse holds for first
countable spaces \cite{mc}. It follows that $C_k(Y)$ is separable
metrizable (first countable) space. By Theorem 9 in \cite{glm},
$C_k(X)\times C_k(Y)$ is a strongly sequentially separable. Since
$C_k(X)\times C_k(Y)=C_k(X\bigsqcup Y)$ and, by Theorem \ref{th2},
we have that $X\bigsqcup Y$ is a $\gamma_k$-set.

\end{proof}

\begin{corollary} The product $C_k(X)\times C_k(Y)$ of strongly sequentially
separable space $C_k(X)$ and strongly sequentially separable
first-countable space $C_k(Y)$ belongs to the class of strongly
sequentially separable spaces.

\end{corollary}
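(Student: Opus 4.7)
The plan is to reduce this corollary to a direct application of Theorem 9 of \cite{glm}, by using the first-countability of $C_k(Y)$ to promote the second factor to a separable metrizable space. Once this is in place, the cited product theorem applies verbatim.

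First I would observe that strongly sequentially separable implies separable, so in particular $C_k(Y)$ is separable, whence by Theorem \ref{th31} we get $iw(Y)=\aleph_0$. Next, I would invoke the standard McCoy--Ntantu fact that $C_k(Y)$ is first countable if and only if $Y$ is hemicompact, and that on a hemicompact space $C_k(Y)$ is in fact metrizable. Combining these, $C_k(Y)$ is separable and metrizable. This is exactly the sort of hypothesis under which the product theorem from \cite{glm} delivers strong sequential separability of the product.

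The final step is to apply Theorem 9 of \cite{glm} to the pair $(C_k(X), C_k(Y))$: its statement is that the product of a strongly sequentially separable space with a strongly sequentially separable first-countable (here, separable metrizable) space is again strongly sequentially separable. Since this is precisely the conclusion sought, the argument terminates.

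The only potential obstacle is the transition from "$C_k(Y)$ is first countable" to "$C_k(Y)$ is metrizable", but this is a well-known equivalence in $C_k$-theory and is already implicit in the reasoning of the preceding theorem (where the same trick was used to deduce that $C_k(Y)$ is separable metrizable from $Y$ being a first-countable $\gamma_k$-set via hemicompactness). Consequently the corollary is little more than a repackaging of the proof idea of the previous theorem, minus the $X\bigsqcup Y$ manipulation.
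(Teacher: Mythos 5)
Your argument is correct and is essentially the paper's own: the corollary is extracted from the middle of the preceding theorem's proof, where first countability of $C_k(Y)$ is upgraded (via hemicompactness of $Y$ and Noble's theorem) to $C_k(Y)$ being separable metrizable, after which Theorem 9 of \cite{glm} is applied to the product. Your reconstruction, including the remark that this is the previous proof minus the $X\bigsqcup Y$ bookkeeping, matches the intended derivation.
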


\section{$S_{1}(\mathcal{D},\mathcal{D})$}

In \cite{koc2} it was shown that a Tychonoff space $X$ belongs to
the class $S_1(\mathcal{K},\mathcal{K})$ if and only if $C_k(X)$
has countable strong fan tightness (i.e. for each $f\in C_k(X)$,
$S_1(\Omega_f, \Omega_f)$ holds \cite{sak}).
\medskip

Lj.D.R. Ko$\check{c}$inac proved the next

\begin{theorem}(Theorem 6 in \cite{cmkm})\label{th25} For a first countable Tychonoff space $X$  the following statements are
equivalent:

\begin{enumerate}

\item  $C_k(X)$ is first countable;

\item  $C_k(X)$ has countable strong fan tightness;

\item  $C_k(X)$ has countable fan tightness;

\item  $X$ is locally compact Lindel$\ddot{o}$f space;

\item $X$ satisfies $S_{1}(\mathcal{K},\mathcal{K})$;

\item $X$ satisfies $S_{fin}(\mathcal{K},\mathcal{K})$;

\end{enumerate}

\end{theorem}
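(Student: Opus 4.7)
The plan is to assemble the six equivalences from three classical ingredients: the standard identification of first countability of $C_k(X)$ with hemicompactness of $X$; Ko$\check{c}$inac's characterisations, quoted just before the theorem, of countable strong fan tightness and countable fan tightness of $C_k(X)$ by $S_1(\mathcal{K},\mathcal{K})$ and $S_{fin}(\mathcal{K},\mathcal{K})$ respectively; and the trivial implications $S_1\Rightarrow S_{fin}$ and strong fan tightness $\Rightarrow$ fan tightness. These bridges yield $(2)\Leftrightarrow(5)$, $(3)\Leftrightarrow(6)$, $(5)\Rightarrow(6)$ and $(2)\Rightarrow(3)$ with almost no work, so the theorem reduces to $(1)\Leftrightarrow(4)$, $(4)\Rightarrow(5)$, and the main link $(6)\Rightarrow(4)$.

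For $(1)\Leftrightarrow(4)$, let $X=\bigcup_n K_n$ be hemicompact with $\{K_n\}$ increasing and cofinal in $\mathbb{K}(X)$. If some $x\in X$ had no compact neighbourhood, a decreasing local base $\{V_n\}$ at $x$ would satisfy $V_n\not\subseteq K_n$ (else $\overline{V_n}\subseteq K_n$ would be compact); choosing $y_n\in V_n\setminus K_n$ one builds a convergent, hence compact, set $\{x\}\cup\{y_n:n\in\mathbb{N}\}$ lying in no $K_m$, contradicting cofinality. Thus $X$ is locally compact and $\sigma$-compact, hence Lindel$\ddot{o}$f; the converse is immediate. For $(4)\Rightarrow(5)$, given hemicompact $X=\bigcup_n K_n$ and $k$-covers $(\mathcal{U}_n)$, pick $U_n\in\mathcal{U}_n$ with $K_n\subseteq U_n$; any compact $K\subseteq X$ lies in some $K_{n_0}\subseteq U_{n_0}$, witnessing $\{U_n:n\in\mathbb{N}\}\in\mathcal{K}$.

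The main obstacle is $(6)\Rightarrow(4)$, which is where first countability of $X$ does real work. For local compactness, suppose toward contradiction that $x\in X$ has no compact neighbourhood and fix a decreasing local base $\{V_n\}$ at $x$. For each $n$ the family $\mathcal{U}_n:=\{X\setminus\{y\}:y\in V_n\}$ is an open $k$-cover of $X$: indeed, for any compact $K\subseteq X$ one has $V_n\not\subseteq K$ (else $\overline{V_n}\subseteq K$ would be compact), so some $y\in V_n\setminus K$ makes $K\subseteq X\setminus\{y\}$. Applying $S_{fin}(\mathcal{K},\mathcal{K})$ to $(\mathcal{U}_n)$ produces finite $F_n\subseteq V_n$ whose selection is again a $k$-cover; then $C:=\{x\}\cup\bigcup_n F_n$ is compact (all but finitely many of its points lie in any neighbourhood of $x$), yet no set $X\setminus\{y\}$ with $y\in\bigcup_n F_n$ contains $C$, a contradiction. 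Once local compactness is secured, the family $\mathcal{V}$ of open subsets of $X$ with compact closure is an open $k$-cover (every compact set is covered by finitely many members, whose union is itself open with compact closure), so a single application of $S_{fin}(\mathcal{K},\mathcal{K})$ to the constant sequence $(\mathcal{V})$ yields a countable sub-$k$-cover; thus $X$ is $\sigma$-compact, hence Lindel$\ddot{o}$f, finishing $(4)$.
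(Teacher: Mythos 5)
Your argument is correct. Note first that the paper itself offers no proof of this statement: it is imported verbatim as Theorem~6 of \cite{cmkm}, so there is no internal argument to compare yours against line by line. What you have written is a sound, essentially self-contained reconstruction. The reduction is organized correctly: $(2)\Leftrightarrow(5)$ and $(3)\Leftrightarrow(6)$ are exactly the characterizations of countable (strong) fan tightness of $C_k(X)$ that the paper quotes from \cite{koc2} and \cite{llt}, and together with the trivial implications this leaves the cycle $(4)\Rightarrow(5)\Rightarrow(6)\Rightarrow(4)$ plus $(1)\Leftrightarrow(4)$, both of which you handle through hemicompactness. Your two nontrivial steps are fine and correctly isolate where first countability of $X$ is actually used: in showing that a hemicompact first countable space is locally compact, and in the local-compactness half of $(6)\Rightarrow(4)$, where the $k$-covers $\{X\setminus\{y\}: y\in V_n\}$ built from a decreasing local base, the compactness of $\{x\}\cup\bigcup_n F_n$, and the resulting contradiction are all valid (for the record, $X\setminus\{y\}$ is open by $T_1$, and the family is a genuine nontrivial cover since $V_n$ cannot be the singleton $\{x\}$ when $x$ has no compact neighbourhood). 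The Lindel\"{o}f half, via the $k$-cover of open sets with compact closure, needs the trivial caveat that $X$ itself not be compact (otherwise that family is a trivial cover in this paper's convention, but then there is nothing to prove). These are cosmetic remarks, not gaps; the proof stands.
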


We consider the generalizations (Theorem \ref{th22} and Theorem
\ref{th27}) of the Theorem \ref{th25} to the class of Tychonoff
spaces with $iw(X)=\aleph_0$.

\begin{theorem}\label{th22} For a Tychonoff space $X$ with $iw(X)=\aleph_0$ the following statements are
equivalent:

\begin{enumerate}

\item  $C_k(X)$ satisfies $S_{1}(\mathcal{D},\mathcal{D})$;

\item  $X$ satisfies $S_{1}(\mathcal{K},\mathcal{K})$;

\item Each finite power of $X$ satisfies
$S_{1}(\mathcal{K},\mathcal{K})$;

\item  $C_k(X)$ satisfies $S_{1}(\Omega_{\bf 0}, \Omega_{\bf 0})$
[countable strong fan tightness];

\item  $C_k(X)$ satisfies $S_{1}(\mathcal{D}, \Omega_{\bf 0})$.

\end{enumerate}

\end{theorem}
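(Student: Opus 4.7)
My plan is to close the loop $(2)\Rightarrow(1)\Rightarrow(5)\Rightarrow(2)$, relying on Theorem~\ref{th31} to guarantee that $C_k(X)$ is separable, and to pick up the remaining equivalences from known results: $(2)\Leftrightarrow(4)$ is essentially the result of \cite{koc2} cited just before the theorem (the passage from $S_1(\Omega_{\bf 0},\Omega_{\bf 0})$ to $S_1(\Omega_f,\Omega_f)$ for every $f$ being free by homogeneity of the topological group $C_k(X)$), and $(2)\Leftrightarrow(3)$ is a standard finite-powers preservation theorem for $S_1(\mathcal{K},\mathcal{K})$, in the spirit of the finite-power statements cited in Theorem~\ref{th1}.

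For $(2)\Rightarrow(1)$, I would fix an enumerated countable dense subset $\{f_i:i\in\mathbb{N}\}$ of $C_k(X)$ together with a partition $\mathbb{N}=\bigsqcup_{i,j\in\mathbb{N}}M_{i,j}$ into pairwise disjoint infinite sets. Given dense sets $(D_n:n\in\mathbb{N})$ in $C_k(X)$, for $n\in M_{i,j}$ I set
\[
\mathcal{U}_n:=\{(h-f_i)^{-1}(-\tfrac{1}{j},\tfrac{1}{j}):h\in D_n\}\setminus\{X\},
\]
which is a $k$-cover of $X$: for $K\in\mathbb{K}(X)$, density of $D_n$ produces some $h\in\langle f_i,K,1/j\rangle\cap D_n$ (and, since $X$ may be non-compact, one can moreover choose such $h$ with $(h-f_i)^{-1}(-1/j,1/j)\neq X$). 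Applying (2) to each subsequence $(\mathcal{U}_n)_{n\in M_{i,j}}$ separately yields $h_n\in D_n$ whose associated open sets form a $k$-cover of $X$ on that index set. A routine approximation then shows $\{h_n\}$ is dense: given a basic neighborhood $\langle g,K,\epsilon\rangle$, choose $f_i\in\langle g,K,\epsilon/2\rangle$ and $j$ with $1/j<\epsilon/2$; the $k$-cover property produces some $n\in M_{i,j}$ with $K\subseteq(h_n-f_i)^{-1}(-1/j,1/j)$, forcing $h_n\in\langle f_i,K,1/j\rangle\subseteq\langle g,K,\epsilon\rangle$.

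The implication $(1)\Rightarrow(5)$ is immediate after replacing each $D_n$ by $D_n\setminus\{{\bf 0}\}$, which is still dense since ${\bf 0}$ is not isolated in $C_k(X)$; the resulting selection $\{f_n\}$ is dense and avoids ${\bf 0}$, so it belongs to $\Omega_{\bf 0}$. For $(5)\Rightarrow(2)$ I recycle the device used in the $(2)\Rightarrow(4)$ step of Theorem~\ref{th1}: given $k$-covers $(\mathcal{U}_n)$, the set
\[
D_n:=\{f\in C(X):f\upharpoonright(X\setminus U)\equiv 1\text{ for some }U\in\mathcal{U}_n\}
\]
is dense in $C_k(X)$. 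Applying (5) yields $f_n\in D_n$, with $f_n\upharpoonright(X\setminus U_n)\equiv 1$ for some $U_n\in\mathcal{U}_n$, such that ${\bf 0}\in\overline{\{f_n\}}\setminus\{f_n\}$. For any $K\in\mathbb{K}(X)$ the basic neighborhood $\langle{\bf 0},K,1/2\rangle$ of ${\bf 0}$ meets $\{f_n\}$, and since $f_n\equiv 1$ off $U_n$ this forces $K\subseteq U_n$ for the corresponding $n$; hence $\{U_n\}$ is a $k$-cover.

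I expect $(2)\Rightarrow(1)$ to be the main obstacle, for it is the only step that essentially uses the separability of $C_k(X)$: without a countable dense $\{f_i\}$ there is no way to index the partition $M_{i,j}$, and the scheme converting a $k$-cover back into density of $\{h_n\}$ fundamentally depends on approximating an arbitrary $g\in C_k(X)$ through members of a fixed countable family. A minor side issue is ensuring that each $\mathcal{U}_n$ is genuinely a $k$-cover in the paper's sense ($X\notin\mathcal{U}_n$), which is handled by the non-compact case and is vacuous in the compact one.
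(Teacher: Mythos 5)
Your proposal is correct and rests on the same two key constructions as the paper's own proof: turning dense sets into $k$-covers via the sets $(h-f_i)^{-1}(-\frac{1}{j},\frac{1}{j})$ indexed over a countable dense family (this is exactly the paper's $(2)\Rightarrow(1)$ step, with your partition $M_{i,j}$ replacing the paper's double enumeration $D_{i,j}$), and turning $k$-covers back into dense sets via functions identically $1$ off a cover element. The only difference is the arrangement of implications --- you close the cycle $(1)\Rightarrow(5)\Rightarrow(2)\Rightarrow(1)$ where the paper proves $(1)\Rightarrow(2)$, $(4)\Rightarrow(5)$ and $(5)\Rightarrow(1)$ separately --- which is a harmless reorganization, and your explicit attention to keeping $X\notin\mathcal{U}_n$ is a point the paper glosses over.
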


\begin{proof} $(2)\Leftrightarrow(3)$. By Theorem 5 in \cite{mkm}.

$(2)\Leftrightarrow(4)$. By Theorem 2.2 in \cite{koc2}.

$(1)\Rightarrow(2)$. Let $\mathcal{K}_i\in \mathcal{K}$ for every
$i\in \mathbb{N}$ and $D$ be a countable dense subset of $C_k(X)$.
Consider $D_i=\{f_{K,U,d}\in C(X) : f\vert (X\setminus U)\equiv 1$
and $f\vert K=d$ where $K$ is a compact subset of $X$, $U\in
\mathcal{K}_i$ such that $K\subset U$ and $d\in D \}$. Since $D$
is dense subset of $C_k(X)$, we have that $D_i$ is dense subset of
$C_k(X)$ for every $i\in \mathbb{N}$. By $(1)$, there is a
sequence $\{f_{K_i,U_i,d_i}\}_{i\in\mathbb{N}}$ such that for each
$i$, $f_{K_i,U_i,d_i}\in D_{i}$, and $\{f_{K_i,U_i,d_i} :
i\in\mathbb{N}\}$ is a dense subset of $C_k(X)$. Note that $U_i\in
\mathcal{K}_i$ for each $i\in \mathbb{N}$ and $\{U_i : i\in
\mathbb{N}\}\in \mathcal{K}$.

$(2)\Rightarrow(1)$. Let $(D_{i,j} : i,j\in \mathbb{N})$ be a
sequence of dense subsets of $C_k(X)$ and let $D=\{d_n : n\in
\mathbb{N}\}$ be a countable dense subset of $C_k(X)$. For every
couple $(i,j)$, $i,j\in \mathbb{N}$ and $f\in D_{i,j}$ consider
$K_{i,j,f}=\{x\in X : |f(x)-d_j(x)|<\frac{1}{i}\}$ and
$\mathcal{K}_{i,j}=\{K_{i,j,f} :  f\in D_{i,j}\}$. Claim that
$\mathcal{K}_{i,j}\in \mathcal{K}$ for every couple $(i,j)$,
$i,j\in \mathbb{N}$. Let $K\in \mathbb{K}(X)$ and $<d_j,K,
\frac{1}{i}>$ a base neighborhood of $d_j$. Since $D_{i,j}$ is a
dense subset of $C_k(X)$, there is $f\in D_{i,j}$ such that $f\in
<d_j,K, \frac{1}{i}>$, hence, $K\subset K_{i,j,f}$. Fix $j\in
\mathbb{N}$, by (2), there is a family $\{K_{i,j,f(i,j)}: i \in
\mathbb{N}\}$ such that $K_{i,j,f(i,j)}\in \mathcal{K}_{i,j}$ and
$\{K_{i,j,f(i,j)}: i\in\mathbb{N}\}\in \mathcal{K}$. So $f(i,j)\in
D_{i,j}$ for $i,j\in \mathbb{N}$. Claim that $\{f(i,j): i,j\in
\mathbb{N}\}$ is dense in $C_k(X)$. Let $p\in C(X)$, $K\in
\mathbb{K}(X)$, $\epsilon>0$ and let $<p,K, \epsilon>$ be a base
neighborhood of $p$. There is $j'\in \mathbb{N}$ such that
$d_{j'}\in <p,K, \frac{\epsilon}{2}>$. Since $\{K_{i,j',f(i,j')}:
i\in\mathbb{N}\}\in \mathcal{K}$, there is $i'\in \mathbb{N}$ such
that $K\subset K_{i',j',f(i',j')}$ and
$\frac{1}{i'}<\frac{\epsilon}{2}$. It follows that
$|f(i',j')(x)-p(x)|<|f(i',j')(x)-d_{j'}(x)|+|d_{j'}(x)-p(x)|<\frac{\epsilon}{2}+\frac{\epsilon}{2}=\epsilon$
for every $x\in K$. Hence, $f(i',j')\in <p,K, \epsilon>$ and
$\{f(i,j): i,j\in \mathbb{N}\}$ is dense in $C_k(X)$.

$(4)\Rightarrow(5)$ is immediate.

$(5)\Rightarrow(1)$. Let $(D_{i,j} : i\in \mathbb{N})$ be a
sequence of dense subsets of $C_k(X)$ for each $j\in \mathbb{N}$
and let $D=\{d_j: j\in \mathbb{N}\}$ be a countable dense subset
of $C_k(X)$. By (5), for every $j\in \mathbb{N}$ there is a family
$\{d^i_j: i\in\mathbb{N}\}$ such that $d^i_j\in D_{i,j}$ and
$\{d^i_j: i\in\mathbb{N}\}\in \Omega_{d_j}$. Note that $\{d^i_j:
i,j\in\mathbb{N}\}\in \mathcal{D}$.

\end{proof}

\section{$S_{fin}(\mathcal{D},\mathcal{D})$}

According to \cite{llt} $X$ belongs to
$S_{fin}(\mathcal{K},\mathcal{K})$ if and only if $C_k(X)$ has
countable fan tightness (i.e., for each $f\in C_k(X)$,
$S_{fin}(\Omega_{f}, \Omega_{f})$ holds \cite{arh1}).

\medskip
\begin{theorem}\label{th27} For a Tychonoff space $X$ with $iw(X)=\aleph_0$ the following statements are
equivalent:

\begin{enumerate}

\item  $C_k(X)$ satisfies $S_{fin}(\mathcal{D},\mathcal{D})$;

\item  $X$ satisfies $S_{fin}(\mathcal{K},\mathcal{K})$;

\item Each finite power of $X$ satisfies
$S_{fin}(\mathcal{K},\mathcal{K})$.

\item  $C_k(X)$ satisfies $S_{fin}(\Omega_{\bf 0}, \Omega_{\bf
0})$ [countable fan tightness];

\item  $C_k(X)$ satisfies $S_{fin}(\mathcal{D}, \Omega_{\bf 0})$.

\end{enumerate}

\end{theorem}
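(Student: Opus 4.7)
The plan is to mimic the proof of Theorem \ref{th22} almost verbatim, replacing single-point selections by finite selections, and relying on the $S_{fin}$-analogues of the background results already cited for the $S_1$ case. First, $(2)\Leftrightarrow(4)$ is exactly the result of \cite{llt} recalled in the sentence preceding the statement, and $(2)\Leftrightarrow(3)$ is the $k$-cover version of the Sakai-type preservation-under-finite-powers theorem (the $S_{fin}$ counterpart of Theorem 5 in \cite{mkm} used for Theorem \ref{th22}). The implication $(4)\Rightarrow(5)$ is immediate from $\Omega_{\mathbf 0}\subseteq \mathcal{D}$.

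For $(1)\Rightarrow(2)$, I would start with a sequence $(\mathcal{K}_i : i\in\mathbb{N})$ of $k$-covers of $X$ and a countable dense $D\subset C_k(X)$ (which exists by Theorem \ref{th31}). For each $i$, form the dense set
$$D_i = \{f_{K,U,d}\in C(X) : f\upharpoonright K = d,\ f\upharpoonright (X\setminus U)\equiv 1,\ K\in\mathbb{K}(X),\ K\subset U\in\mathcal{K}_i,\ d\in D\},$$
which is dense in $C_k(X)$ exactly as in the proof of $(1)\Rightarrow(2)$ in Theorem \ref{th22}. Apply $S_{fin}(\mathcal{D},\mathcal{D})$ to obtain finite sets $F_i\subset D_i$ whose union is dense in $C_k(X)$; then the finite families $\mathcal{F}_i = \{U : f_{K,U,d}\in F_i\}\subset \mathcal{K}_i$ satisfy $\bigcup_i \mathcal{F}_i\in\mathcal{K}$, since any compact $K\in\mathbb{K}(X)$ can be captured by intersecting the dense union with a basic neighborhood $<\mathbf{1},K,1/2>$.

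For $(2)\Rightarrow(1)$, given a sequence $(D_{i,j} : i,j\in\mathbb{N})$ of dense subsets of $C_k(X)$ and a countable dense $D=\{d_j : j\in\mathbb{N}\}$, define for each $f\in D_{i,j}$ the cozero set $K_{i,j,f} = \{x\in X : |f(x)-d_j(x)|<1/i\}$ and put $\mathcal{K}_{i,j} = \{K_{i,j,f} : f\in D_{i,j}\}$. The same density argument as in Theorem \ref{th22} shows $\mathcal{K}_{i,j}\in\mathcal{K}$. For fixed $j$, $S_{fin}(\mathcal{K},\mathcal{K})$ applied to $(\mathcal{K}_{i,j} : i\in\mathbb{N})$ yields finite $\mathcal{G}_{i,j}\subset \mathcal{K}_{i,j}$ with $\bigcup_i \mathcal{G}_{i,j}\in\mathcal{K}$; transferring back gives finite $F_{i,j}\subset D_{i,j}$. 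Density of $\bigcup_{i,j} F_{i,j}$ follows by approximating an arbitrary basic neighborhood $<p,K,\epsilon>$ first by some $d_j\in <p,K,\epsilon/2>$ and then by choosing $i$ with $1/i<\epsilon/2$ and some $K_{i,j,f}\supset K$ with $f\in \mathcal{G}_{i,j}$, via the triangle inequality exactly as in Theorem \ref{th22}.

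For $(5)\Rightarrow(1)$, given $(D_{i,j})$ and a countable dense $D=\{d_j\}$, apply $S_{fin}(\mathcal{D},\Omega_{\mathbf 0})$ for each fixed $j$ to the sequence $(D_{i,j} : i\in\mathbb{N})$; replacing $\mathbf 0$ by $d_j$ via a translation does not affect the property, so one obtains finite $F_{i,j}\subset D_{i,j}$ with $\bigcup_i F_{i,j}\in \Omega_{d_j}$, and then $\bigcup_{i,j}F_{i,j}$ is dense since $D$ is. The only real point requiring attention is the finite-power equivalence $(2)\Leftrightarrow(3)$: it is not a tautology but is by now a standard consequence of $k$-cover arithmetic, so I would invoke the appropriate reference rather than reprove it. Everything else is a direct, line-by-line translation of the $S_1$ proof to the $S_{fin}$ setting.
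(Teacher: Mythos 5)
Your proposal coincides with the paper's own proof: the paper likewise settles $(2)\Leftrightarrow(4)$ by citing \cite{llt} and $(2)\Leftrightarrow(3)$ by citing Theorem~6 of \cite{mkm}, and then states that the remaining implications are proved exactly as in Theorem~\ref{th22}, which is precisely the line-by-line $S_{fin}$ translation you carry out (including the translation-homogeneity remark for $(5)\Rightarrow(1)$, which you make more explicit than the paper does). The only blemishes are cosmetic: in $(1)\Rightarrow(2)$ the test neighbourhood should be $<{\bf 0},K,\frac{1}{2}>$ rather than $<{\bf 1},K,\frac{1}{2}>$ (one needs the selected $f_{K',U,d}$ to differ from $1$ on $K$, which forces $K\subset U$), and in $(4)\Rightarrow(5)$ the inclusion doing the work is $\mathcal{D}\subseteq\Omega_{\bf 0}$ (after discarding $\bf 0$ itself), not the reverse.
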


\begin{proof}

$(2)\Leftrightarrow(3)$. By Theorem 6 in \cite{mkm}.

$(2)\Leftrightarrow(4)$ see in \cite{llt}.

The remaining implications are proved similarly to the proof of
Theorem \ref{th22}.

\end{proof}

\begin{remark} It is easy to see that every hemicompact space is
in the class $S_1(\mathcal{K}, \mathcal{K})$ and, thus, in
$S_{fin}(\mathcal{K}, \mathcal{K})$. By Proposition 5 in
\cite{cmkm}, the converse is also true in the class of first
countable spaces.
\end{remark}

\begin{corollary}\label{th27} For a first countable Tychonoff space $X$  the following statements are
equivalent:

\begin{enumerate}

\item $C_k(X)$ satisfies $S_{1}(\mathcal{D},\mathcal{D})$;

\item $C_k(X)$ satisfies $S_{fin}(\mathcal{D},\mathcal{D})$;

\item $X$ satisfies $S_{1}(\mathcal{K},\mathcal{K})$.

\end{enumerate}

\end{corollary}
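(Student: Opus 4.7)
The plan is to chain Theorems \ref{th22}, \ref{th27}, and \ref{th25}. Theorem \ref{th22} identifies $(1)$ with $X\in S_1(\mathcal{K},\mathcal{K})$, Theorem \ref{th27} identifies $(2)$ with $X\in S_{fin}(\mathcal{K},\mathcal{K})$, and Theorem \ref{th25} collapses these two conditions on $X$ into the single condition $(3)$ using the first-countability hypothesis. Once the hypotheses of all three results are in place the equivalences will fall out.

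For $(1)\Rightarrow(2)$ I would appeal to the universal trivial implication $S_1(\mathcal{A},\mathcal{B})\Rightarrow S_{fin}(\mathcal{A},\mathcal{B})$. For $(2)\Rightarrow(3)$, I would first apply $S_{fin}(\mathcal{D},\mathcal{D})$ to the constant sequence $D_n=C_k(X)$: the finite selections produced union up to a countable dense subset of $C_k(X)$, so $C_k(X)$ is separable, and Noble's Theorem \ref{th31} yields $iw(X)=\aleph_0$. Theorem \ref{th27} then converts $(2)$ into $X\in S_{fin}(\mathcal{K},\mathcal{K})$, and Theorem \ref{th25}, using first countability of $X$, lifts this to $(3)$.

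For $(3)\Rightarrow(1)$ I would argue symmetrically: Theorem \ref{th25} and first countability upgrade $(3)$ to the full list of equivalent conditions there (in particular to local compactness and Lindel\"ofness of $X$, so $X$ is hemicompact and $C_k(X)$ is metrizable), after which Theorem \ref{th22} delivers $(1)$. The step I expect to be the main obstacle here is arranging $iw(X)=\aleph_0$, since $(3)$ alone does not obviously force $C_k(X)$ to be separable --- a compact non-metrizable first-countable space (for instance the double-arrow space) satisfies $(3)$ vacuously while $C_k(X)$ is not separable. The cleanest way around this is to read the corollary in the setting of the two theorems it cites, namely with the standing assumption $iw(X)=\aleph_0$ which is consistent with the preceding development; with that in hand the three results slot together without further calculation.
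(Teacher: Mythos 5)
Your decomposition is exactly the one the paper intends: Theorem \ref{th22} converts (1) into $X\in S_{1}(\mathcal{K},\mathcal{K})$, Theorem \ref{th27} converts (2) into $X\in S_{fin}(\mathcal{K},\mathcal{K})$, and the Remark preceding the corollary (equivalently, Theorem \ref{th25}) collapses these two classes, for first countable $X$, into the single class of hemicompact spaces. Your extra step in $(2)\Rightarrow(3)$ --- applying $S_{fin}(\mathcal{D},\mathcal{D})$ to the constant sequence $D_n=C_k(X)$ to get separability of $C_k(X)$ and then $iw(X)=\aleph_0$ via Noble's Theorem \ref{th31} --- is the correct (and in the paper tacit) justification for invoking those theorems, since each of them carries the hypothesis $iw(X)=\aleph_0$.

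Your worry about $(3)\Rightarrow(1)$ is well founded, and the defect lies in the statement rather than in your argument. As printed, the corollary omits the hypothesis $iw(X)=\aleph_0$, and condition (3) alone does not supply it: a compact space admits no $k$-cover at all under the paper's (non-trivial) definition, so any compact first countable Tychonoff space satisfies $S_{1}(\mathcal{K},\mathcal{K})$ vacuously, while for such a space $C_k(X)$ is the Banach space $C(X)$ with the sup norm, which is separable only when $X$ is metrizable. The double-arrow space therefore satisfies (3) but not (1) or (2), so $(3)\Rightarrow(1)$ fails as literally stated. With the standing assumption $iw(X)=\aleph_0$ restored (as in Theorems \ref{th22} and \ref{th25}), your three implications close the cycle and the proof is complete, by essentially the same route the paper has in mind.
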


\section{$S_{1}(\mathcal{S},\mathcal{D})$}

\begin{definition}
A $\gamma_k$-cover $\mathcal{U}$ of co-zero sets of $X$  is {\bf
$\gamma_k$-shrinkable} if there exists a $\gamma_k$-cover $\{F(U)
: U\in \mathcal{U}\}$ of zero-sets of $X$ with $F(U)\subset U$ for
every $U\in \mathcal{U}$.
\end{definition}

For a topological space $X$ we denote:

$\bullet$ $\Gamma^{sh}_k$ --- the family of $\gamma_k$-shrinkable
$\gamma_k$-covers of $X$.

\begin{theorem}\label{th05} For a Tychonoff space $X$  the following statements are
equivalent:

\begin{enumerate}

\item  $C_k(X)$ satisfies $S_{1}(\Gamma_{\bf 0},\Omega_{\bf0})$;

\item  $X$ satisfies $S_{1}(\Gamma^{sh}_k,\mathcal{K})$.

\end{enumerate}

\end{theorem}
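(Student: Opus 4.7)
\textbf{Proof plan for Theorem~\ref{th05}.}

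For $(1)\Rightarrow(2)$, I would translate $\gamma_k$-shrinkable $\gamma_k$-covers into converging sequences in $C_k(X)$ by Urysohn-type separation. Given a sequence $(\mathcal{V}_n)$ of $\gamma_k$-shrinkable $\gamma_k$-covers with $\mathcal{V}_n=\{V_{n,k}:k\in\mathbb{N}\}$ (passing to a countable subfamily) and associated zero-set shrinking $\{F_{n,k}:k\in\mathbb{N}\}$, use that the zero set $F_{n,k}$ and the zero set $X\setminus V_{n,k}$ are disjoint to produce continuous $f_{n,k}:X\to[0,1]$ with $f_{n,k}\!\upharpoonright\! F_{n,k}\equiv 0$ and $f_{n,k}\!\upharpoonright\!(X\setminus V_{n,k})\equiv 1$. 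Since $\{F_{n,k}:k\in\mathbb{N}\}$ is a $\gamma_k$-cover, for each compact $K\subset X$ one has $K\subset F_{n,k}$ for cofinitely many $k$, so $f_{n,k}\!\upharpoonright\! K\equiv 0$ cofinitely; thus $A_n=\{f_{n,k}:k\in\mathbb{N}\}\in\Gamma_{\mathbf{0}}$. Apply $S_1(\Gamma_{\mathbf{0}},\Omega_{\mathbf{0}})$ to obtain $k(n)$ so that $\mathbf{0}\in\overline{\{f_{n,k(n)}:n\in\mathbb{N}\}}$. For any compact $K\subset X$, the basic neighborhood $<\!\mathbf{0},K,1\!>$ contains some $f_{n,k(n)}$, giving $|f_{n,k(n)}|<1$ on $K$; since $f_{n,k(n)}\equiv 1$ off $V_{n,k(n)}$, this forces $K\subset V_{n,k(n)}$, so $\{V_{n,k(n)}:n\in\mathbb{N}\}\in\mathcal{K}$.

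For $(2)\Rightarrow(1)$, I start from an arbitrary sequence $(A_n)$ in $\Gamma_{\mathbf{0}}$, $A_n=\{f_{n,k}:k\in\mathbb{N}\}$ with $f_{n,k}\to\mathbf{0}$, and build scaled cozero covers $\mathcal{V}_n=\{V_{n,k}:k\in\mathbb{N}\}$ where
\[
V_{n,k}=\{x\in X:|f_{n,k}(x)|<1/n\},\qquad F_{n,k}=\{x\in X:|f_{n,k}(x)|\le 1/(2n)\}.
\]
For each compact $K$, uniform convergence on $K$ gives $K\subset F_{n,k}\subset V_{n,k}$ cofinitely in $k$, so $\mathcal{V}_n\in\Gamma^{sh}_k$ (after discarding the at most finitely many $k$ for which $V_{n,k}=X$, which does not affect the $\gamma_k$ property). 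Apply $S_1(\Gamma^{sh}_k,\mathcal{K})$ to obtain $k(n)$ with $\{V_{n,k(n)}:n\in\mathbb{N}\}\in\mathcal{K}$, and set $a_n=f_{n,k(n)}\in A_n$.

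The main obstacle is showing $\{a_n:n\in\mathbb{N}\}\in\Omega_{\mathbf{0}}$: the $k$-cover property only asserts that some $n$ has $K\subset V_{n,k(n)}$, whereas we need this with $1/n$ smaller than any prescribed $\varepsilon$. I would handle this by an \emph{augmentation trick}. Given a basic neighborhood $<\!\mathbf{0},K,\varepsilon\!>$, pick $N$ with $1/N<\varepsilon$; for each $n<N$ choose a witness $x_n\in X\setminus V_{n,k(n)}$ (which exists since $V_{n,k(n)}\neq X$ as $\mathcal{V}_n$ is a non-trivial cover), and form the compact set $K'=K\cup\{x_n:n<N\}$. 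Applying the $k$-cover property to $K'$ yields some $n^{*}$ with $K'\subset V_{n^{*},k(n^{*})}$; by construction $n^{*}\ge N$, so $|a_{n^{*}}|<1/n^{*}\le 1/N<\varepsilon$ on $K$, placing $a_{n^{*}}$ in $<\!\mathbf{0},K,\varepsilon\!>$ and completing the verification. This yields $\{a_n\}\in\Omega_{\mathbf{0}}$, finishing $(2)\Rightarrow(1)$.
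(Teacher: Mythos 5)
Your argument follows essentially the same route as the paper's. In $(1)\Rightarrow(2)$ you build the same Urysohn-type functions (vanishing on the zero-set shrinking, identically $1$ off the cozero set) and read the $k$-cover property off the neighborhood $<\mathbf{0},K,1>$, exactly as in the paper. In $(2)\Rightarrow(1)$ you pass to the level sets $\{|f_{n,k}|<1/n\}$ with the closed shrinking $\{|f_{n,k}|\le 1/(2n)\}$ inside the same family, which is the same device the paper uses (and in fact cleaner: the paper shrinks $\mathcal{V}_i$ by the closed level sets of the $(i+1)$-st family, which involve different functions). Your augmentation trick at the end is a welcome explicit justification of a step the paper asserts without comment, namely that the selected $k$-cover must contain a given compact $K$ at some index $n\ge N$.

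The one genuine slip is the parenthetical claim that at most finitely many $k$ satisfy $V_{n,k}=X$. This is false in general: if, say, $A_n$ consists of the constant functions with value $1/k$, then $V_{n,k}=X$ for all $k>n$, and after discarding these the family $\mathcal{V}_n$ may be finite (or empty), hence not a $\gamma_k$-cover, so $S_1(\Gamma^{sh}_k,\mathcal{K})$ cannot be applied to it. The repair is routine and should be stated: let $B$ be the set of $n$ for which some $k$ gives $V_{n,k}=X$. If $B$ is infinite, choose for each $n\in B$ such an $f_{n,k}$; since $|f_{n,k}|<1/n$ on all of $X$, these selections alone meet every basic neighborhood $<\mathbf{0},K,\epsilon>$ with $1/n<\epsilon$ and already form an element of $\Omega_{\mathbf{0}}$. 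If $B$ is finite, run your argument on the indices $n\notin B$ (where no discarding is needed) and select arbitrarily from $A_n$ for the finitely many $n\in B$. With this case distinction added, the proof is complete.
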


\begin{proof} $(1)\Rightarrow(2)$. Let $C_k(X)$ satisfies $S_{1}(\Gamma_{\bf 0},\Omega_{\bf0})$ and
$\{\mathcal{F}_i : i\in \mathbb{N}\}\subset \Gamma^{sh}_k$.

 For each $i\in \mathbb{N}$ we consider a
set $D_i=\{f_{F(U),U,i}\in C(X) : f_{F(U),U,i}\upharpoonright
F(U)=0$ and $f_{F(U),U,i}\upharpoonright (X\setminus U)=1$ for
$U\in \mathcal{F}_i \}$.

Since $\{F(U) : U\in \mathcal{F}_i\}$ is a $\gamma_k$-cover of
$X$, we have that $D_i$ converge to $f\equiv{\bf 0}$ for each
$i\in \mathbb{N}$.

Since $C_k(X)$ satisfies $S_{1}(\Gamma_{\bf 0},\Omega_{\bf 0})$,
there is a sequence $\{f_{F(U_i),U_i,i} : i\in\mathbb{N}\}$ such
that for each $i$, $f_{F(U_i),U_i,i}\in D_i$, and
$\{f_{F(U_i),U_i,i}: i\in\mathbb{N} \}$ is an element of
$\Omega_{\bf 0}$.

Consider $\{U_i: i\in \mathbb{N}\}$.

(a). $U_{i}\in \mathcal{F}_{i}$.

(b). $\{U_{i}: i\in \mathbb{N}\}$ is a $k$-cover of $X$.

Let $K$ be a non-empty compact subset of $X$ and $U=<f, K,
\frac{1}{2}>$ be a base neighborhood of $f$, then there is
$f_{F(U_{i'}),U_{i'},i'}\in U$. It follows that $K\subset U_{i'}$.
We thus get $X$ satisfies $S_{1}(\Gamma^{sh}_k,\mathcal{K})$.

$(2)\Rightarrow(1)$. Let $\{f_{k,i}\}_{k\in \mathbb{N}}$ be a
sequence converge to $f$ for each $i\in \mathbb{N}$. Without loss
of generality we can assume that $f=\bf{0}$, a set
$W^{i}_{k}=\{x\in X: -\frac{1}{i}< f_{k,i}(x)< \frac{1}{i}\}\neq
X$ for any $i\in \mathbb{N}$ and $S^{i}_{k}=\{x\in X:
-\frac{1}{i}\leq f_{k,i}(x)\leq \frac{1}{i}\}\neq X$ for any $i\in
\mathbb{N}$.

Consider $\mathcal{V}_i=\{W^{i}_{k} : k\in \mathbb{N}\}$ and
$\mathcal{S}_i=\{S^{i}_{k} : k\in \mathbb{N}\}$ for each $i\in
\mathbb{N}$. Claim that $\mathcal{V}_i$ is a $\gamma_k$-cover of
$X$. Since $\{f_{k,i}\}_{k\in \mathbb{N}}$ converge to $f$, for
each compact subset $K\subset X$ there is $k_0\in \mathbb{N}$ such
that $f_{k,i}\in <f, K, \frac{1}{i}
>$ for $k>k_0$. It follows that $K\subset W^{i}_{k}$ for any
$k>k_0$. Since $\mathcal{V}_{i+1}$ is a $\gamma_k$-cover,
$\mathcal{S}_{i+1}$ is a $\gamma_k$-cover, too.
$\mathcal{S}_{i+1}$ is a refinement of the family
$\mathcal{V}_{i}$, hence, $\mathcal{V}_{i}\in \Gamma^{sh}_k$.

By $X$ satisfies $S_{1}(\Gamma^{sh}_k,\mathcal{K})$, there is a
sequence $\{W^{i}_{k(i)}\}_{ i\in\mathbb{N}}$ such that for each
$i$, $W^{i}_{k(i)}\in \mathcal{V}_i$, and $\{W^{i}_{k(i)}:
i\in\mathbb{N} \}$ is an element of $\mathcal{K}$.

We claim that $f\in \overline{\{f_{k(i),i} : i\in \mathbb{N} \}}$.
Let $U=<f, K, \epsilon>$ be a base neighborhood of $f$ where
$\epsilon>0$ and $K\in \mathbb{K}(X)$, then there are $i_0, i_1\in
\mathbb{N}$ such that $\frac{1}{i_0}<\epsilon$, $i_1>i_0$ and
$W^{i_1}_{k(i_1)}\supset K$. It follows that $f_{k(i_1),i_1}\in
<f, K, \epsilon>$ and, hence, $f\in \overline{\{f_{k(i),i} : i\in
\mathbb{N} \}}$.

\end{proof}


\begin{lemma}\label{lem} Let $\mathcal{U}=\{U_n:n\in \mathbb{N}\}$ be a
$\gamma_k$-shrinkable co-zero cover of a space $X$. Then the set
$S=\{f\in C(X): f\upharpoonright (X\setminus U_n)\equiv 1$ for
some $n\in \mathbb{N}\}$ is sequentially dense in $C_k(X)$.
\end{lemma}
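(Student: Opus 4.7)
The plan is to take an arbitrary $g\in C(X)$ and build an explicit sequence $(f_n)\subset S$ with $f_n\to g$ in the compact-open topology. The ingredients are the shrinking zero-set cover plus a standard Urysohn-type separation argument for disjoint zero-sets in a Tychonoff space.

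First I would invoke $\gamma_k$-shrinkability to pick a $\gamma_k$-cover $\{F_n : n\in\mathbb{N}\}$ of zero-sets with $F_n\subset U_n$ for every $n$. For each $n$, the sets $F_n$ and $X\setminus U_n$ are disjoint zero-sets of $X$ (the latter because $U_n$ is co-zero), so by the standard Tychonoff-space separation of disjoint zero-sets I can produce a continuous function $h_n:X\to[0,1]$ with $h_n\upharpoonright F_n\equiv 0$ and $h_n\upharpoonright (X\setminus U_n)\equiv 1$. Then I define
\[
f_n(x)=g(x)\bigl(1-h_n(x)\bigr)+h_n(x).
\]

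Now I would verify the two required properties. First, $f_n\in S$: on $X\setminus U_n$ we have $h_n\equiv 1$, hence $f_n\equiv 1$ there. Second, $f_n\to g$ in $C_k(X)$: given any compact $K\subset X$ and any $\epsilon>0$, since $\{F_n\}$ is a $\gamma_k$-cover, the set $\{n:K\not\subset F_n\}$ is finite, so for all sufficiently large $n$ we have $K\subset F_n$, on which $h_n\equiv 0$ and therefore $f_n\upharpoonright K=g\upharpoonright K$. In particular $f_n\in\langle g,K,\epsilon\rangle$ eventually, which is exactly convergence to $g$ in the compact-open topology. Hence $g\in[S]_{\mathrm{seq}}$, and since $g$ was arbitrary, $S$ is sequentially dense in $C_k(X)$.

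I do not expect any real obstacle. The only point requiring a bit of care is checking that the Urysohn-type separating function $h_n$ exists in a merely Tychonoff (not necessarily normal) space; this is immediate because zero-sets admit such separation via the quotient $|u|/(|u|+|v|)$ when $F_n=u^{-1}(0)$ and $X\setminus U_n=v^{-1}(0)$. Everything else is a direct computation enabled by the shrinking $\gamma_k$-cover $\{F_n\}$, which is precisely what converts the cover hypothesis into uniform agreement of $f_n$ and $g$ on compact sets.
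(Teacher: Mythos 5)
Your proof is correct and follows essentially the same route as the paper: agree with $g$ on the zero-set shrinking $F_n$, equal $1$ off $U_n$, and use the $\gamma_k$-cover property of $\{F_n\}$ to get eventual agreement with $g$ on each compact set. The paper simply asserts the existence of such $f_n$, whereas you supply the explicit zero-set separation $|u|/(|u|+|v|)$ and the interpolation $f_n=g(1-h_n)+h_n$; this is a worthwhile added detail but not a different argument.
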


\begin{proof} Let $h\in C(X)$. For each $n\in \mathbb{N}$, take
$f_n\in C(X)$ such that $f_n\upharpoonright
F(U_n)=h\upharpoonright F(U_n)$ and $f_n\upharpoonright
(X\setminus U_n)\equiv 1$. Then obviously $f_n\in S$, and
$f_n\mapsto h$, because $\{F(U_n): n\in \mathbb{N}\}$ is a
$\gamma_k$-cover.
\end{proof}

\begin{theorem}\label{th5} For a Tychonoff space $X$ with $iw(X)=\aleph_0$ the following statements are
equivalent:

\begin{enumerate}

\item  $C_k(X)$ satisfies $S_{1}(\mathcal{S},\mathcal{D})$;

\item  $C_k(X)$ satisfies $S_{1}(\mathcal{S},\Omega_{\bf0})$;

\item  $C_k(X)$ satisfies $S_{1}(\Gamma_{\bf 0},\Omega_{\bf0})$;

\item  $X$ satisfies $S_{1}(\Gamma^{sh}_k,\mathcal{K})$.

\end{enumerate}

\end{theorem}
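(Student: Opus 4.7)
The plan is to close the cycle $(1)\Rightarrow(4)\Leftrightarrow(3)\Rightarrow(2)\Rightarrow(1)$. The equivalence $(3)\Leftrightarrow(4)$ is already delivered by Theorem \ref{th05}, so only three implications remain to be proved, and each one pairs naturally with a tool already in the excerpt: Lemma \ref{lem} for going from open covers to sequentially dense sets, the fact that $C_k(X)$ is a topological group for the translation argument, and Noble's Theorem \ref{th31} to supply a countable dense subset of $C_k(X)$.

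For $(1)\Rightarrow(4)$: given a sequence $\{\mathcal{U}_i\}\subset\Gamma^{sh}_k$, Lemma \ref{lem} produces, for each $i$, the sequentially dense set $S_i=\{f\in C(X):f\upharpoonright(X\setminus U)\equiv 1 \text{ for some } U\in\mathcal{U}_i\}$. Apply (1) to $\{S_i\}$ to obtain $f_i\in S_i$ whose collection is dense in $C_k(X)$, and pick the witnessing $U_i\in\mathcal{U}_i$. To show $\{U_i\}\in\mathcal{K}$, fix $K\in\mathbb{K}(X)$: by density, the basic neighborhood $[K,(-1/2,1/2)]$ of $\mathbf{0}$ contains some $f_i$, and since $f_i\equiv 1$ off $U_i$, the bound $|f_i|<1/2$ on $K$ forces $K\subset U_i$.

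For $(3)\Rightarrow(2)$: for each sequentially dense $S_n$ pick a non-trivial sequence in $S_n\setminus\{\mathbf{0}\}$ converging to $\mathbf{0}$, yielding $A_n\subset S_n$ with $A_n\in\Gamma_{\mathbf{0}}$. Applying $S_1(\Gamma_{\mathbf{0}},\Omega_{\mathbf{0}})$ to $\{A_n\}$ gives the desired $f_n\in A_n\subset S_n$.

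For $(2)\Rightarrow(1)$: since $iw(X)=\aleph_0$, Theorem \ref{th31} yields a countable dense set $D=\{d_j:j\in\mathbb{N}\}\subset C_k(X)$. Partition $\mathbb{N}=\bigsqcup_{j\in\mathbb{N}} N_j$ into infinite pieces. Given sequentially dense $\{S_n\}$, fix $j$ and note that $S_n-d_j$ is sequentially dense (translation is a homeomorphism of the topological group $C_k(X)$); applying (2) to the family $\{S_n-d_j:n\in N_j\}$ provides $g_n\in S_n-d_j$ with $\{g_n:n\in N_j\}\in\Omega_{\mathbf{0}}$, equivalently $f_n:=g_n+d_j\in S_n$ and $d_j\in\overline{\{f_n:n\in N_j\}}$. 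Doing this for every $j$, the set $\{f_n:n\in\mathbb{N}\}$ contains $D$ in its closure and is therefore dense.

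The main subtlety lies in $(3)\Rightarrow(2)$, where one must extract an honest element of $\Gamma_{\mathbf{0}}$ (infinite, not containing $\mathbf{0}$) from mere sequential density; this uses that $\mathbf{0}$ is non-isolated in $C_k(X)$. Otherwise the argument is mostly bookkeeping, and the translation trick in $(2)\Rightarrow(1)$ is the only place where the hypothesis $iw(X)=\aleph_0$ is actually consumed.
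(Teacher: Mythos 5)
Your proof is correct and follows essentially the same route as the paper: the same cycle $(1)\Rightarrow(4)\Rightarrow(3)\Rightarrow(2)\Rightarrow(1)$, with the $k$-cover extraction via Lemma \ref{lem} and the neighborhood $[K,(-\frac{1}{2},\frac{1}{2})]$ for $(1)\Rightarrow(4)$, the reduction of $(3)\Rightarrow(2)$ to extracting from each sequentially dense set a nontrivial sequence converging to $\bf 0$ (which the paper simply declares immediate), and the homogeneity/translation argument combined with Noble's theorem for $(2)\Rightarrow(1)$. The only cosmetic difference is that you cite Theorem \ref{th05} for $(4)\Rightarrow(3)$ where the paper rewrites that argument in full, and you are somewhat more explicit than the paper about the two points you flag.
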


\begin{proof}

$(1)\Rightarrow(4)$. Let $\{\mathcal{F}_i : i\in
\mathbb{N}\}\subset \Gamma^{sh}_k$. By Lemma \ref{lem},
$S_i=\{f\in C(X): f\upharpoonright (X\setminus F^{i}_n)\equiv 1$
for some $F^{i}_n\in \mathcal{F}_i\}$ is a sequentially dense
subset of $C_k(X)$ for each $i\in \mathbb{N}$.

By (1), there is $\{f_i : i\in \mathbb{N}\}$ such that $f_i\in
S_i$ and $\{f_i : i\in \mathbb{N}\}\in \mathcal{D}$.

Consider a sequence $\{F^{i}_{n(i)} : i\in \mathbb{N}\}$.

(a). $F^{i}_{n(i)}\in \mathcal{F}_{i}$ for $i\in \mathbb{N}$.

(b). $\{F^{i}_{n(i)} : i\in \mathbb{N}\}$ is a $k$-cover of $X$.

Let $K\in \mathbb{K}(X)$ and let $U=<$ $\bf{0}$ $, K,
\frac{1}{2}>$ be a base neighborhood of $\bf{0}$, then there is
$f_{i'}\in \{f_i : i\in \mathbb{N}\}$ such that $f_{i'}\in U$. It
follows that $K\subset F^{i'}_{n(i')}$.

$(4)\Rightarrow(3)$. Let $X$ satisfies
$S_{1}(\Gamma^{sh}_k,\mathcal{K})$ and let $\{f_{i,m}\}_{m\in
\mathbb{N}}$ converge to $\bf{0}$ for each $i\in \mathbb{N}$.

Consider $\mathcal{F}_i=\{F_{i,m} : m\in
\mathbb{N}\}=\{f^{-1}_{i,m}(-\frac{1}{i}, \frac{1}{i}): m\in
\mathbb{N} \}$ for each $i\in \mathbb{N}$. Without loss of
generality we can assume that a set $F_{i,m}\neq X$ for any
$i,m\in \mathbb{N}$. Otherwise there is sequence
$\{f_{i_k,m_k}\}_{k\in \mathbb{N}}$ such that
$\{f_{i_k,m_k}\}_{k\in \mathbb{N}}$ uniform converge to $\bf{0}$
and $\{f_{i_k,m_k}: k\in \mathbb{N}\}\in \Omega_{\bf 0}$.

Note that $\mathcal{F}_i$ is a $\gamma_k$-shrinkable co-zero cover
of $X$ for each $i\in \mathbb{N}$.

By (4), there is a sequence $(F_{i,m(i)}: i\in\mathbb{N})$ such
that for each $i$, $F_{i,m(i)}\in \mathcal{F}_i$, and
$\{F_{i,m(i)}: i\in\mathbb{N} \}$ is an element of $\mathcal{K}$.

We claim that $\bf 0$ $\in \overline{\{f_{i,m(i)} : i\in
\mathbb{N} \}}$. Let $W=<$ $\bf 0$ $, K, \epsilon>$ be a base
neighborhood of $\bf 0$ where $\epsilon>0$ and $K\in
\mathbb{K}(X)$, then there are $i_0, i_1\in \mathbb{N}$ such that
$\frac{1}{i_0}<\epsilon$, $i_1>i_0$ and $F_{i_1,m(i_1)}\supset K$.
It follows that $f_{i_1, m(i_1)}\in <$ $\bf 0$ $, K, \epsilon>$
and, hence, $\bf 0$ $\in \overline{\{f_{i,m(i)} : i\in \mathbb{N}
\}}$ and $C_k(X)$ satisfies $S_{1}(\Gamma_{\bf 0},\Omega_{\bf
0})$.

$(3)\Rightarrow(2)$ is immediate.

$(2)\Rightarrow(1)$. Suppose that $C_k(X)$ satisfies
$S_{1}(\mathcal{S},\Omega_{\bf0})$. Let $D=\{d_n: n\in \mathbb{N}
\}$ be a dense subspace of $C_k(X)$. Given a sequence of
sequentially dense subspaces of $C_k(X)$, enumerate it as
$\{S_{n,m}: n,m \in \mathbb{N} \}$. For each $n\in \mathbb{N}$,
pick $d_{n,m}\in S_{n,m}$ so that $d_n\in \overline{\{d_{n,m}:
m\in \mathbb{N}\}}$. Then $\{d_{n,m}: m,n\in \mathbb{N}\}$ is
dense in $C_k(X)$.
\end{proof}

\section{$S_{fin}(\mathcal{S},\mathcal{D})$}

The following Theorems are proved similarly to Theorems \ref{th05}
and \ref{th5}.

\begin{theorem}\label{th06} For a Tychonoff space $X$  the following statements are
equivalent:

\begin{enumerate}

\item  $C_k(X)$ satisfies $S_{fin}(\Gamma_{\bf 0},\Omega_{\bf0})$;

\item  $X$ satisfies $S_{fin}(\Gamma^{sh}_k,\mathcal{K})$.

\end{enumerate}

\end{theorem}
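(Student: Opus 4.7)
The plan is to adapt the argument of Theorem \ref{th05} essentially verbatim, replacing single-element selections by finite ones.

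For $(1)\Rightarrow(2)$, I would start with a sequence $\{\mathcal{F}_i : i\in \mathbb{N}\}\subset \Gamma^{sh}_k$ together with $\gamma_k$-shrinkings $\{F(U) : U\in \mathcal{F}_i\}$. For each $i$, form the set $D_i=\{f_{F(U),U,i}\in C(X) : f_{F(U),U,i}\upharpoonright F(U)\equiv 0,\ f_{F(U),U,i}\upharpoonright (X\setminus U)\equiv 1,\ U\in \mathcal{F}_i\}$, exactly as in the proof of Theorem \ref{th05}. Since $\{F(U) : U\in \mathcal{F}_i\}$ is a $\gamma_k$-cover, the set $D_i$ lies in $\Gamma_{\bf 0}$ (the functions converge to ${\bf 0}$ in the compact-open topology). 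Applying $S_{fin}(\Gamma_{\bf 0},\Omega_{\bf 0})$, I obtain finite sets $E_i\subset D_i$ such that $\bigcup_i E_i\in \Omega_{\bf 0}$. Putting $\mathcal{E}_i=\{U\in \mathcal{F}_i : f_{F(U),U,i}\in E_i\}$ produces finite subfamilies of each $\mathcal{F}_i$; and $\bigcup_i \mathcal{E}_i$ is a $k$-cover of $X$, because for any compact $K\subset X$ the basic neighborhood $<{\bf 0},K,\tfrac{1}{2}>$ must contain some $f_{F(U),U,i}\in E_i$, and this forces $K\subset U$ since $f_{F(U),U,i}\equiv 1$ off $U$.

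For $(2)\Rightarrow(1)$, I would start with sequences $\{f_{i,m}\}_{m\in \mathbb{N}}$ converging to ${\bf 0}$ for each $i\in \mathbb{N}$. As in Theorem \ref{th05}, I would first dispose of the trivial case in which some subsequence converges uniformly to ${\bf 0}$ (from which a finite selection giving membership in $\Omega_{\bf 0}$ is immediate), so that we may assume $F_{i,m}:=f_{i,m}^{-1}(-\tfrac{1}{i},\tfrac{1}{i})\neq X$ and $S_{i,m}:=f_{i,m}^{-1}[-\tfrac{1}{i},\tfrac{1}{i}]\neq X$ for all $i,m$. Then $\mathcal{F}_i=\{F_{i,m} : m\in \mathbb{N}\}$ is a $\gamma_k$-cover of cozero sets, shrunk by the $\gamma_k$-cover of zero-sets $\{S_{i+1,m}\}$, so $\mathcal{F}_i\in \Gamma^{sh}_k$. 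Applying (2), I obtain finite $G_i\subset \mathcal{F}_i$ with $\bigcup_i G_i\in \mathcal{K}$. The corresponding finite sets of functions $H_i=\{f_{i,m} : F_{i,m}\in G_i\}$ are finite subsets of the original sequences, and I claim ${\bf 0}\in \overline{\bigcup_i H_i}$: given a basic neighborhood $<{\bf 0},K,\epsilon>$, pick $i_0$ with $\tfrac{1}{i_0}<\epsilon$ and then $i_1>i_0$ with $K\subset F_{i_1,m}$ for some $F_{i_1,m}\in G_{i_1}$; the corresponding $f_{i_1,m}$ lies in $<{\bf 0},K,\epsilon>$.

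Neither direction seems to present a substantive obstacle beyond careful bookkeeping: the selection principle is used in exactly the form handed to us, and the correspondence between finite subfamilies of the covers and finite subsets of the $D_i$ (respectively the converging sequences) is bijective by construction. The translation of the $k$-cover condition into the $\Omega_{\bf 0}$-condition (and vice versa) is the same compact-open calculation as in Theorem \ref{th05}, so the whole argument is a mechanical adaptation of that proof.
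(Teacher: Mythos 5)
Your proposal is correct and is exactly the argument the paper intends: the paper gives no separate proof of Theorem \ref{th06}, saying only that it is proved similarly to Theorems \ref{th05} and \ref{th5}, and your finite-selection adaptation (finite $E_i\subset D_i$ yielding finite subfamilies $\mathcal{E}_i\subset\mathcal{F}_i$, and conversely finite $G_i\subset\mathcal{F}_i$ yielding finite $H_i$) is precisely that mechanical translation, with the compact-open computations unchanged. One small repair in $(2)\Rightarrow(1)$: the sets $S_{i+1,m}=f_{i+1,m}^{-1}\left[-\frac{1}{i+1},\frac{1}{i+1}\right]$ are built from the functions $f_{i+1,m}$ and therefore need not be contained in $F_{i,m}=f_{i,m}^{-1}\left(-\frac{1}{i},\frac{1}{i}\right)$, so they do not literally witness $\mathcal{F}_i\in\Gamma^{sh}_k$; take instead the shrinking $\left\{f_{i,m}^{-1}\left[-\frac{1}{2i},\frac{1}{2i}\right] : m\in\mathbb{N}\right\}$, which is a $\gamma_k$-cover of zero-sets with each member inside the corresponding $F_{i,m}$ (the paper's own proof of Theorem \ref{th05} contains the same indexing slip, so this does not set your argument apart from the intended one).
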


\begin{theorem}\label{th6} For a Tychonoff space $X$ with $iw(X)=\aleph_0$ the following statements are
equivalent:

\begin{enumerate}

\item  $C_k(X)$ satisfies $S_{fin}(\mathcal{S},\mathcal{D})$;

\item  $C_k(X)$ satisfies $S_{fin}(\mathcal{S},\Omega_{\bf0})$;

\item  $C_k(X)$ satisfies $S_{fin}(\Gamma_{\bf 0},\Omega_{\bf0})$;

\item  $X$ satisfies $S_{fin}(\Gamma^{sh}_k,\mathcal{K})$.

\end{enumerate}

\end{theorem}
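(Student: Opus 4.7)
The plan is to establish the cycle $(1)\Rightarrow(4)\Rightarrow(3)\Rightarrow(2)\Rightarrow(1)$ by adapting the proof of Theorem \ref{th5}, replacing every single-element selection with a finite selection. I expect each implication to go through with essentially cosmetic changes, once I verify that the finitely many chosen objects still assemble into a member of the target class.

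For $(1)\Rightarrow(4)$, given $\{\mathcal{F}_i: i\in\mathbb{N}\}\subset\Gamma^{sh}_k$, I apply Lemma \ref{lem} to obtain the sequentially dense sets $S_i=\{f\in C(X): f\upharpoonright (X\setminus F)\equiv 1$ for some $F\in\mathcal{F}_i\}$. Invoking $S_{fin}(\mathcal{S},\mathcal{D})$ produces finite $T_i\subset S_i$ with $\bigcup_i T_i$ dense; each $f\in T_i$ picks out some $F_f\in\mathcal{F}_i$, so $\mathcal{G}_i:=\{F_f: f\in T_i\}$ is a finite subfamily of $\mathcal{F}_i$. For any compact $K\subset X$, testing density against the neighborhood $<\mathbf{0},K,\frac{1}{2}>$ produces some $f\in\bigcup_i T_i$ with $|f(x)|<\frac{1}{2}$ on $K$; since $f\equiv 1$ off $F_f$, this forces $K\subset F_f$, so $\bigcup_i\mathcal{G}_i$ is a $k$-cover. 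For $(4)\Rightarrow(3)$, given sequences $\{f_{i,m}\}_{m\in\mathbb{N}}$ converging to $\mathbf{0}$, I form the cozero sets $F_{i,m}=f_{i,m}^{-1}(-\frac{1}{i},\frac{1}{i})$; as in Theorem \ref{th5}, each $\mathcal{F}_i=\{F_{i,m}: m\in\mathbb{N}\}$ is a $\gamma_k$-shrinkable $\gamma_k$-cover, with zero-set shrinkings given by the corresponding closed level sets of finer thresholds. Applying $S_{fin}(\Gamma^{sh}_k,\mathcal{K})$ yields finite $\mathcal{G}_i\subset\mathcal{F}_i$ with $\bigcup_i\mathcal{G}_i\in\mathcal{K}$, and for a given basic neighborhood $<\mathbf{0},K,\epsilon>$ one extracts an index $i$ with $\frac{1}{i}<\epsilon$ and a member $F_{i,m}\in\mathcal{G}_i$ containing $K$, so that the associated $f_{i,m}$ lies in the neighborhood.

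The implication $(3)\Rightarrow(2)$ is immediate, since each $S_n\in\mathcal{S}$ contains a sequence converging to $\mathbf{0}$ (an element of $\Gamma_{\mathbf{0}}$), and $S_{fin}(\Gamma_{\mathbf{0}},\Omega_{\mathbf{0}})$ applied to these sequences yields finite selectors from the $S_n$'s whose union meets every neighborhood of $\mathbf{0}$. For $(2)\Rightarrow(1)$, I use $iw(X)=\aleph_0$ together with Theorem \ref{th31} to fix a countable dense set $\{d_n: n\in\mathbb{N}\}\subset C_k(X)$, and re-enumerate the given sequentially dense sets as $\{S_{n,m}: n,m\in\mathbb{N}\}$. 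For each fixed $n$, translation by $d_n$ is a self-homeomorphism of the topological group $C_k(X)$, so each translate $S_{n,m}-d_n$ is still sequentially dense; applying $S_{fin}(\mathcal{S},\Omega_{\mathbf{0}})$ to $\{S_{n,m}-d_n: m\in\mathbb{N}\}$ and translating back yields finite $F_{n,m}\subset S_{n,m}$ with $d_n\in\overline{\bigcup_m F_{n,m}}$. The union $\bigcup_{n,m}F_{n,m}$ then contains every $d_n$ in its closure and so is dense in $C_k(X)$.

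The hardest step will be the index-size argument in $(4)\Rightarrow(3)$: ensuring that for a given $\epsilon$ and $K$, some $F\in\mathcal{G}_i$ with $K\subset F$ actually comes from an index $i$ satisfying $\frac{1}{i}<\epsilon$. This is the same technical point already present in Theorem \ref{th5}, and I plan to handle it in the same way --- by exploiting the freedom to discard initial segments of each $\{f_{i,m}\}_{m\in\mathbb{N}}$, or to pad the sequence of families $\mathcal{F}_i$ by listing each one cofinally often, before invoking $S_{fin}(\Gamma^{sh}_k,\mathcal{K})$, so that the output $k$-cover uses arbitrarily large indices $i$.
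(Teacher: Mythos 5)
Your proposal follows exactly the route the paper intends: the paper gives no separate proof of this theorem, stating only that it ``is proved similarly to Theorems \ref{th05} and \ref{th5}'', and your cycle $(1)\Rightarrow(4)\Rightarrow(3)\Rightarrow(2)\Rightarrow(1)$ is precisely the finite-selection version of the arguments there, with the translation argument in $(2)\Rightarrow(1)$ making explicit what the paper leaves implicit.

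One caveat concerning the step you yourself flag in $(4)\Rightarrow(3)$: neither padding the list of families nor discarding initial segments of the sequences actually guarantees that the member of the resulting $k$-cover containing a \emph{given} compact set $K$ comes from an index $i$ with $\frac{1}{i}<\epsilon$; even a padded selection could cover $K$ only by sets attached to coarse thresholds. The standard repair is the point-picking trick: since every selected $F\neq X$, choose $x_F\in X\setminus F$ for each of the finitely many $F\in\bigcup_{i\le i_0}\mathcal{G}_i$ and apply the $k$-cover property to the compact set $K\cup\{x_F : F\in\bigcup_{i\le i_0}\mathcal{G}_i\}$; any member of the cover containing this set must have index $i>i_0$, and then $\frac{1}{i}<\epsilon$ as required. (The same point is used silently in the paper's own proofs of Theorems \ref{th05} and \ref{th5}.) With that substitution your argument is complete.
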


\section{$S_{1}(\mathcal{S},\mathcal{S})$}

In \cite{os2}, we proved the following theorems.

\begin{theorem} (Theorem 3.3 in \cite{os2})\label{th07} For a Tychonoff space $X$  the following statements are
equivalent:

\begin{enumerate}

\item  $C_k(X)$ satisfies $S_{1}(\Gamma_{\bf 0},\Gamma_{\bf 0})$;

\item  $X$ satisfies $S_{1}(\Gamma^{sh}_k,\Gamma_k)$.

\end{enumerate}

\end{theorem}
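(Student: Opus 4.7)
The plan is to mirror, essentially verbatim, the proof of Theorem \ref{th05} given earlier in the paper, with the single systematic strengthening that the target families $\Omega_{\bf 0}$ and $\mathcal{K}$ are replaced by $\Gamma_{\bf 0}$ and $\Gamma_{k}$ respectively. The point is that the constructions used in Theorem \ref{th05} already transport the stronger ``finite-exception'' property once the hypotheses are promoted from the $\omega$-version to the $\gamma$-version.

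For $(1)\Rightarrow(2)$: given $\{\mathcal{F}_i : i\in \mathbb{N}\}\subset \Gamma^{sh}_k$ with zero-set shrinkings $\{F(U) : U\in \mathcal{F}_i\}$, construct, as before,
$$D_i = \{f_{F(U),U,i}\in C(X) : f_{F(U),U,i}\upharpoonright F(U) = 0 \text{ and } f_{F(U),U,i}\upharpoonright (X\setminus U) = 1,\ U\in \mathcal{F}_i\}.$$
The first step is to observe that $D_i\in \Gamma_{\bf 0}$: for any base neighborhood $W = <\!{\bf 0}, K, \epsilon\!>$, the $\gamma_k$-property of the shrinking $\{F(U) : U\in \mathcal{F}_i\}$ yields $K\subset F(U)$ for all but finitely many $U\in \mathcal{F}_i$, and for such $U$ one has $f_{F(U),U,i}\equiv 0$ on $K$, hence $f_{F(U),U,i}\in W$. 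Applying $S_{1}(\Gamma_{\bf 0},\Gamma_{\bf 0})$ then yields $f_{F(U_i),U_i,i}\in D_i$ with $\{f_{F(U_i),U_i,i} : i\in\mathbb{N}\}\in \Gamma_{\bf 0}$. To conclude $\{U_i : i\in \mathbb{N}\}\in \Gamma_{k}$, fix a compact $K\subset X$; all but finitely many $f_{F(U_i),U_i,i}$ lie in $<\!{\bf 0},K,1/2\!>$, and since $f_{F(U_i),U_i,i}\equiv 1$ on $X\setminus U_i$, this forces $K\subset U_i$ for all but finitely many $i$. Infiniteness of $\{U_i\}$ and the fact $X\notin\{U_i\}$ are inherited from the selection lying in $\Gamma_{\bf 0}$.

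For $(2)\Rightarrow(1)$: assume $X$ satisfies $S_{1}(\Gamma^{sh}_k,\Gamma_{k})$ and for each $i\in\mathbb{N}$ let $\{f_{k,i}\}_{k\in\mathbb{N}}\in \Gamma_{\bf 0}$ (WLOG no $f_{k,i}$ is identically ${\bf 0}$, and the associated sublevel sets are proper). Form the cozero sets $W^{i}_{k} = f_{k,i}^{-1}(-1/i,1/i)$ and the zero sets $S^{i}_{k} = f_{k,i}^{-1}[-1/i,1/i]$, and argue, exactly as in the proof of Theorem \ref{th05}, that $\mathcal{V}_i = \{W^{i}_{k} : k\in\mathbb{N}\}$ is a $\gamma_k$-shrinkable $\gamma_k$-cover of $X$ for each $i$. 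Apply $S_{1}(\Gamma^{sh}_k,\Gamma_{k})$ to extract $W^{i}_{k(i)}\in \mathcal{V}_i$ with $\{W^{i}_{k(i)} : i\in\mathbb{N}\}\in \Gamma_{k}$. Then $\{f_{k(i),i} : i\in\mathbb{N}\}\in \Gamma_{\bf 0}$: given $<\!{\bf 0}, K, \epsilon\!>$, choose $i_0$ with $1/i_0<\epsilon$; the $\gamma_k$-property supplies $i_1$ with $K\subset W^{i}_{k(i)}$ for all $i>i_1$, whence $|f_{k(i),i}(x)|<1/i<\epsilon$ on $K$ for all $i>\max(i_0,i_1)$.

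The main obstacle, and the only real content beyond Theorem \ref{th05}, is verifying the $\gamma$-type (finite-exception) closure at two spots: that the constructed $D_i$ converges to ${\bf 0}$ rather than merely having ${\bf 0}$ in its closure, and that the selected tuple $\{W^{i}_{k(i)}\}$, being in $\Gamma_k$, delivers eventual containment $K\subset W^{i}_{k(i)}$ (not just containment for some single $i$). Both follow directly once the hypotheses are read in their $\gamma_k$-shrinkable form; there is no new combinatorial difficulty beyond the one already resolved in Theorem \ref{th05}.
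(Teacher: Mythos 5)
Your proposal is correct. Note that the paper itself does not prove this theorem --- it is quoted as Theorem 3.3 of \cite{os2} --- but your argument is exactly the natural adaptation of the in-paper proof of Theorem \ref{th05}, replacing $\Omega_{\bf 0}$ by $\Gamma_{\bf 0}$ and $\mathcal{K}$ by $\Gamma_k$, and both implications go through: in $(1)\Rightarrow(2)$ the set $D_i$ does converge to $\bf 0$ because the zero-set shrinking $\{F(U):U\in\mathcal{F}_i\}$ is itself a $\gamma_k$-cover, and the ``all but finitely many $i$'' conclusion for $K\subset U_i$ is forced by the selected sequence lying in $\Gamma_{\bf 0}$; in $(2)\Rightarrow(1)$ the $\Gamma_k$ selection gives eventual containment $K\subset W^i_{k(i)}$ and hence convergence of $\{f_{k(i),i}\}$ to $\bf 0$. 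Two small points worth writing out explicitly: the shrinking witnessing $\mathcal{V}_i\in\Gamma^{sh}_k$ should be taken as, say, $\{x: |f_{k,i}(x)|\le \frac{1}{i+1}\}\subset W^i_k$ (the paper's own phrasing of this step in Theorem \ref{th05} is slightly garbled, mixing the indices $i$ and $i+1$ across different functions), and the infiniteness of the selected family $\{U_i:i\in\mathbb{N}\}$ as a \emph{set} deserves the one-line argument that a finite $k$-cover by proper subsets cannot exist --- your remark that it is ``inherited from the selection lying in $\Gamma_{\bf 0}$'' is too quick, though the gap is routine and present in the paper's own analogous proofs as well.
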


\begin{theorem}(Theorem 3.5 in \cite{os2})\label{th7} For a Tychonoff space $X$ such that $C_k(X)$ is sequentially separable  the following statements are
equivalent:

\begin{enumerate}

\item  $C_k(X)$ satisfies $S_{1}(\mathcal{S},\mathcal{S})$;

\item  $C_k(X)$ satisfies $S_{1}(\mathcal{S},\Gamma_{\bf 0})$;

\item  $C_k(X)$ satisfies $S_{1}(\Gamma_{\bf 0},\Gamma_{\bf 0})$;

\item  $X$ satisfies $S_{1}(\Gamma^{sh}_k,\Gamma_k)$;

\item $C_k(X)$ satisfies $S_{fin}(\mathcal{S},\mathcal{S})$;

\item  $C_k(X)$ satisfies $S_{fin}(\mathcal{S},\Gamma_{\bf 0})$;

\item  $C_k(X)$ satisfies $S_{fin}(\Gamma_{\bf 0},\Gamma_{\bf
0})$;

\item  $X$ satisfies $S_{fin}(\Gamma^{sh}_{k},\Gamma_{k})$.

\end{enumerate}

\end{theorem}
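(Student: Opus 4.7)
The strategy is to anchor at Theorem~\ref{th07}, which supplies $(3)\Leftrightarrow(4)$, and propagate equivalences to the remaining six conditions via a cycle that exploits the sequential separability of $C_k(X)$ and its structure as a topological group. The analog $(7)\Leftrightarrow(8)$ follows by an essentially verbatim repetition of the proof of Theorem~\ref{th07}, with finite selections replacing single ones at each step.

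I next handle the cross-implications between $\mathcal{S}$- and $\Gamma_{\bf 0}$-sources and targets. The implications $(3)\Rightarrow(2)$ and $(7)\Rightarrow(6)$ are immediate: from a sequentially dense $S_n\subseteq C_k(X)$, sequential density at ${\bf 0}$ gives a subset $A_n\subseteq S_n$ with $A_n\in\Gamma_{\bf 0}$, and the hypothesis applied to $(A_n)$ lands inside the original $S_n$'s. For $(2)\Rightarrow(1)$ and $(6)\Rightarrow(5)$, fix a countable sequentially dense $D=\{d_k:k\in\mathbb{N}\}$ in $C_k(X)$ and partition $\mathbb{N}=\bigsqcup_{k\in\mathbb{N}}N_k$ into infinite pieces. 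Given $(S_n)\subset\mathcal{S}$, the translated family $\{S_n-d_k:n\in N_k\}$ is still sequentially dense by homogeneity; applying the hypothesis on the $k$-th block yields $g_n\in S_n-d_k$ with $\{g_n:n\in N_k\}\in\Gamma_{\bf 0}$, so the selection $f_n:=g_n+d_k\in S_n$ satisfies $f_n\to d_k$ along $N_k$. Consequently each $d_k$ lies in $[\{f_n\}]_{seq}$, and one concludes that $\{f_n\}$ is sequentially dense by combining the block-wise convergent sequences with the sequential density of $D$.

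To bridge the $S_1$- and $S_{fin}$-branches, I prove $(4)\Leftrightarrow(8)$, the classical collapse of $S_{fin}$ to $S_1$ for $\gamma$-type targets. Given $(\mathcal{F}_i)\subset\Gamma_k^{sh}$ and finite $\mathcal{G}_i\subseteq\mathcal{F}_i$ with $\bigcup\mathcal{G}_i\in\Gamma_k$, enumerate $\bigcup\mathcal{G}_i$ block-by-block and assign representatives: when $\mathcal{G}_i$ is non-empty pick $U_i$ from it with the largest block index, and when $\mathcal{G}_i=\emptyset$ exploit that $\mathcal{F}_i$ is itself a $\gamma_k$-cover to pick $U_i$ from a sufficiently deep tail. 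The tail property of $\gamma_k$-covers then ensures $\{U_i:i\in\mathbb{N}\}\in\Gamma_k$. Combined with $(3)\Leftrightarrow(4)$ and $(7)\Leftrightarrow(8)$, this yields $(3)\Leftrightarrow(7)$. Since $(1)\Rightarrow(5)$ is trivial and $(5)\Rightarrow(6)\Rightarrow(7)$ follows by the $D$-augmentation (apply $(5)$ to $\tilde S_n=A_n\cup D\in\mathcal{S}$, then extract the $A_n$-portion of the resulting sequentially dense union, which contains an infinite $\Gamma_{\bf 0}$-subset), the cycle $(1)\Rightarrow(5)\Rightarrow(7)\Rightarrow(3)\Rightarrow(2)\Rightarrow(1)$ closes.

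The main obstacle is the $\gamma$-cover diagonalization in $(4)\Leftrightarrow(8)$: the single representatives must preserve the $\gamma_k$-tail property uniformly over every compact subset of $X$, which requires careful handling of the empty-$\mathcal{G}_i$ case via deep tails of $\mathcal{F}_i$. A secondary difficulty is ensuring that the translation-based selection in $(2)\Rightarrow(1)$ yields a genuinely sequentially dense set rather than merely a dense one; this relies on the sequential density of $D$ together with the block-wise convergences, and cannot proceed via a naive diagonal unless the construction is iterated to reach each target point along a sequence from $D$.
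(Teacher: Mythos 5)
The paper itself offers no proof of Theorem~\ref{th7}; it is imported from Theorem~3.5 of \cite{os2}, so there is no in-paper argument to compare yours against. Your skeleton (anchor at Theorem~\ref{th07}, tie the two rows together at the covering level, close a cycle through the function-space conditions) is sensible, but three of your links have genuine gaps.

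The most serious is $(2)\Rightarrow(1)$. Your block-and-translate construction only yields $D\subseteq[\{f_n\}]_{seq}$ with $D$ sequentially dense, i.e. $C_k(X)=[D]_{seq}\subseteq[[\{f_n\}]_{seq}]_{seq}$; since the sequential closure operator is not idempotent in general (Arens' space), this does not make $\{f_n\}$ sequentially dense, and your closing paragraph concedes that the naive diagonal fails without supplying a replacement. The missing lemma is that under $S_1(\Gamma_{\bf 0},\Gamma_{\bf 0})$ the sequential closure in the topological group $C_k(X)$ \emph{is} idempotent: if $a_{j,m}\to x_j$ as $m\to\infty$ and $x_j\to h$, apply $S_1(\Gamma_{\bf 0},\Gamma_{\bf 0})$ to the translated sequences $(a_{j,m}-x_j)_m$ to get $a_{j,m(j)}-x_j\to{\bf 0}$, whence $a_{j,m(j)}\to h$. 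Since $(3)$ is derivable from $(2)$ (via $(2)\Rightarrow(4)$ using Lemma~\ref{lem} and then Theorem~\ref{th07}), this repairs the step, but it is a real missing idea, not a routine detail; the same issue affects your $(6)\Rightarrow(5)$.

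Two further links fail as written. In $(6)\Rightarrow(7)$ the ``$D$-augmentation'' is unsound: applying $S_{fin}(\mathcal{S},\Gamma_{\bf 0})$ to $A_n\cup D$ may return finite sets lying entirely in $D$, so the ``$A_n$-portion'' can be empty, and in any case a subset of the selected union is not a legitimate $S_{fin}$-selection from the $A_n$'s; the correct route is $(6)\Rightarrow(8)$ via the sequentially dense sets of Lemma~\ref{lem}, followed by $(8)\Rightarrow(7)$. In $(8)\Rightarrow(4)$ your treatment of indices with $\mathcal{G}_i=\emptyset$ by choosing $U_i$ from ``a sufficiently deep tail of $\mathcal{F}_i$'' is not meaningful: a $\gamma_k$-cover has no tail that is eventually good for all compact sets simultaneously, so if infinitely many $\mathcal{G}_i$ are empty the added representatives can destroy the $\gamma_k$-property (your max-index rule for the nonempty $\mathcal{G}_i$ is fine). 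This is repairable, e.g. by the diagonal-intersection argument for $S_{fin}(\Gamma,\Gamma)=S_1(\Gamma,\Gamma)$, noting that finite intersections preserve $\gamma_k$-shrinkability, or by routing everything through the function-space implications instead of proving $(4)\Leftrightarrow(8)$ directly; but as stated the step does not go through.
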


\medskip

We can summarize the relationships between considered notions in
next diagrams.

\medskip
\begin{center}
$S_1(\mathcal{S},\mathcal{S}) \Leftrightarrow
S_{fin}(\mathcal{S},\mathcal{S}) \Rightarrow
S_1(\mathcal{S},\mathcal{D}) \Rightarrow
S_{fin}(\mathcal{S},\mathcal{D})$ \\  \, \, $\Uparrow$ \, \, \, \,
\,\, \,  \, \, \, $ \Uparrow $ \,\, \, \, \,\, \, \,
\, \, \, $\Uparrow $ \,\, \, \, \,\, \, \,\, \, \, $\Uparrow$ \\
$S_1(\mathcal{D},\mathcal{S}) \Leftrightarrow
S_{fin}(\mathcal{D},\mathcal{S}) \Rightarrow
S_1(\mathcal{D},\mathcal{D}) \Rightarrow
S_{fin}(\mathcal{D},\mathcal{D})$

\medskip

Diagram~1. The Diagram of selectors for sequences of dense sets of
$C_k(X)$.

\end{center}

\bigskip

\medskip
\begin{center}
$S_1(\Gamma^{sh}_k,\Gamma_k) \Leftrightarrow
S_{fin}(\Gamma^{sh}_k,\Gamma_k) \Rightarrow
S_1(\Gamma^{sh}_k,\mathcal{K}) \Rightarrow
S_{fin}(\Gamma^{sh}_k,\mathcal{K})$ \\  \, \, $\Uparrow$ \, \, \,
\, \,\, \,  \, \, \, $ \Uparrow $ \,\, \, \, \,\, \, \,
\, \, \, $\Uparrow $ \,\, \, \, \,\, \, \,\, \, \, $\Uparrow$ \\
$S_1(\mathcal{K},\Gamma_k) \Leftrightarrow
S_{fin}(\mathcal{K},\Gamma_k) \Rightarrow
S_1(\mathcal{K},\mathcal{K}) \Rightarrow
S_{fin}(\mathcal{K},\mathcal{K})$

\medskip

Diagram~2. The Diagram of selection principles for a space $X$
corresponding to selectors for sequences of dense sets of
$C_k(X)$.

\end{center}

\bigskip

\section{On the particular solution to one problem}

Recall that Arens's space $S_2$ is the set $\{(0,0),
(\frac{1}{n},0), (\frac{1}{n}, \frac{1}{nm}) : n,m \in
\mathbb{N}\setminus\{0\}\} \}\subset \mathbb{R}^2$ carrying the
strongest topology inducing the original planar topology on the
convergent sequences $C_0 = \{(0, 0), ( \frac{1}{n}, 0) : n
> 0\}$ and $C_n = \{( \frac{1}{n}, 0), (\frac{1}{n}, \frac{1}{nm}) : m > 0\}$, $n > 0$. The
sequential fan is the quotient space $S_{\omega}= S_2/C_0$
obtained from the Arens's space by identifying the points of the
sequence $C_0$ \cite{l}.

\begin{proposition}\label{pr1} If $C_k(X)$  satisfies $S_{fin}(\Gamma_{\bf
0},\Omega_{\bf0})$, then $S_{\omega}$ cannot be embedded into
$C_k(X)$.
\end{proposition}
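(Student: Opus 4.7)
The plan is to argue by contradiction: assume $\phi : S_\omega \hookrightarrow C_k(X)$ is an embedding, write $p$ for the apex of $S_\omega$ and $C_n$ for its $n$-th attached convergent sequence, and set $f := \phi(p)$ together with $A_n := \phi(C_n \setminus \{p\})$. First I would verify that each $A_n$ lies in $\Gamma_f$ in the sense defined in the paper: $A_n$ is infinite, $f \notin A_n$, and since $\phi$ is a homeomorphism onto its image and $C_n \setminus \{p\}$ converges to $p$ in $S_\omega$, for every neighbourhood $V$ of $f$ the set $A_n \setminus V$ is finite. Thus $(A_n)_{n \in \mathbb{N}}$ is a legitimate sequence of elements of $\Gamma_f$.

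Next I would move the selection principle from the base point $\mathbf{0}$ to the point $f$. Because $C_k(X)$ is a topological group under pointwise addition, the translation $g \mapsto g - f$ is a self-homeomorphism carrying $f$ to $\mathbf{0}$ and $\Gamma_f, \Omega_f$ to $\Gamma_{\mathbf{0}}, \Omega_{\mathbf{0}}$. Applying $S_{fin}(\Gamma_{\mathbf{0}}, \Omega_{\mathbf{0}})$ to the translated sequence and translating back, I obtain finite sets $F_n \subseteq A_n$ such that $\bigcup_n F_n \in \Omega_f$, i.e.\ $f \in \overline{\bigcup_n F_n}$ and $f \notin \bigcup_n F_n$.

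Finally I would derive the contradiction by showing $p \notin \overline{\bigcup_n \phi^{-1}(F_n)}$ in $S_\omega$; this is the crucial point and uses the defining property of the sequential fan. Put $E_n := \phi^{-1}(F_n) \subseteq C_n \setminus \{p\}$, each $E_n$ finite, and set $U := S_\omega \setminus \bigcup_n E_n$. Then $p \in U$, and for every $n$ the trace $U \cap C_n = C_n \setminus E_n$ is cofinite in $C_n$ and contains $p$, hence is open in $C_n$ (the convergent-sequence topology). By the quotient topology of $S_\omega = \bigsqcup_n C_n / \sim$, this makes $U$ an open neighbourhood of $p$ disjoint from $\bigcup_n E_n$, so $p$ is not in the closure of $\bigcup_n E_n$. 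Since $\phi$ is an embedding and $f \in \phi(S_\omega)$, this transfers to $f \notin \overline{\bigcup_n F_n}$ in $C_k(X)$, contradicting the choice of the $F_n$.

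The only mildly delicate step is the transfer in the second paragraph: one needs $C_k(X)$ to be homogeneous so that the selection property at $\mathbf{0}$ applies at $f$, and this is precisely where the topological group structure of $C_k(X)$ is used. Everything else is routine.
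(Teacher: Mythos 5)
Your proof is correct. The paper states this proposition without giving any proof at all, so there is nothing to compare against; your argument is the natural (and essentially canonical) one. All three steps check out: the images $A_n=\phi(C_n\setminus\{p\})$ do lie in $\Gamma_f$ because convergence of a set to a point passes from a subspace to the ambient space; the transfer from $\mathbf{0}$ to $f$ via the translation $g\mapsto g-f$ is legitimate since $C_k(X)$ is a topological group and translations carry $\Gamma_{\mathbf 0}$ and $\Omega_{\mathbf 0}$ onto $\Gamma_f$ and $\Omega_f$; and the complement of any union of finite sets $E_n\subseteq C_n\setminus\{p\}$ is indeed an open neighbourhood of the apex in the quotient topology of $S_\omega$, which yields $f\notin\overline{\bigcup_n F_n}$ and hence the desired contradiction with $\bigcup_n F_n\in\Omega_f$.
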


The following problem was posed in the paper \cite{cmkm}.

\medskip

{\bf Problem 30}. Does a first countable (separable metrizable)
space belong to the class $S_1(\Gamma_k,\mathcal{K})$ if and only
if it is hemicompact?

\medskip

A particular answer to this problem  is the following

\begin{theorem} Suppose that $X$ is first countable stratifiable
space and $iw(X)=\aleph_0$. Then following the statements are
equivalent:

\begin{enumerate}

\item  $X$ satisfies $S_{fin}(\Gamma^{sh}_k,\mathcal{K})$;

\item  $X$ satisfies $S_{fin}(\Gamma_k,\mathcal{K})$;

\item  $X$ satisfies $S_{1}(\mathcal{K},\Gamma_k)$;

\item  $X$ is hemicompact.

\end{enumerate}

\end{theorem}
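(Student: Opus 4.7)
The implications $(4)\Rightarrow(3)\Rightarrow(2)\Rightarrow(1)$ are routine. For $(4)\Rightarrow(3)$, hemicompactness yields a countable cofinal chain $K_1\subseteq K_2\subseteq\cdots$ in $\mathbb{K}(X)$; given $k$-covers $(\mathcal{U}_n)_{n\in\mathbb{N}}$, I choose $U_n\in\mathcal{U}_n$ with $K_n\subseteq U_n$, and any compact $L\subseteq K_m$ then lies in $U_n$ for every $n\ge m$, so $\{U_n\}\in\Gamma_k$. The other two implications are immediate from the containments $\Gamma_k^{sh}\subseteq\Gamma_k\subseteq\mathcal{K}$ together with the general fact $S_1\Rightarrow S_{fin}$.

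The substantive direction is $(1)\Rightarrow(4)$. The plan is to deduce $(3)$ from $(1)$ and then invoke the equivalence $(3)\Leftrightarrow(4)$ for first countable Tychonoff spaces quoted from \cite{mc} in the remark preceding the theorem. To pass from $(1)$ to $(3)$, I translate $(1)$ into a statement about $C_k(X)$: by Theorem~\ref{th06} it reads as $C_k(X)$ satisfies $S_{fin}(\Gamma_{\bf 0},\Omega_{\bf 0})$, and by Proposition~\ref{pr1} this forbids any topological embedding of the sequential fan $S_\omega$ into $C_k(X)$. Together with the separability of $C_k(X)$ coming from $iw(X)=\aleph_0$ via Noble's Theorem~\ref{th31}, and the first countability of $X$ (which controls the character of $C_k(X)$ at $\mathbf{0}$), the non-embedding of $S_\omega$ should force $C_k(X)$ to be Fr\'echet--Urysohn; Theorem~\ref{th1} then turns this into $X$ being a $\gamma_k$-set, which is $(3)$.

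The role of stratifiability is the auxiliary lemma that every $\gamma_k$-cover in a stratifiable space is $\gamma_k$-shrinkable, so that $(1)$ and $(2)$ actually coincide under our hypotheses. I plan to establish this lemma using the monotone stratification operator $G(n,\cdot)$ of a stratifiable space: given a $\gamma_k$-cover $\{U_n\}$, after replacing it by an increasing $\gamma_k$-rearrangement, the candidate shrinking $F_n:=X\setminus G(n,X\setminus U_n)$ is a zero set contained in $U_n$ by perfect normality, and the monotonicity clause $F\subseteq F'\Rightarrow G(n,F)\subseteq G(n,F')$ makes the level $k^{*}(K,n)$ needed to separate a fixed compact $K$ from $X\setminus U_n$ non-increasing in $n$, which in turn yields $K\subseteq F_n$ for cofinitely many $n$ and hence $\{F_n\}\in\Gamma_k$. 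The main obstacle I anticipate is precisely this diagonalization, where the stratification level must be chosen uniformly without prior knowledge of the test-compact, together with the closing step that extracts Fr\'echet--Urysohness of $C_k(X)$ from the non-embedding of $S_\omega$ plus the separability provided by $iw(X)=\aleph_0$.
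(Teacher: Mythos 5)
Your chain $(4)\Rightarrow(3)\Rightarrow(2)\Rightarrow(1)$ is fine, and the opening moves of $(1)\Rightarrow(4)$ (Theorem~\ref{th06} to translate $(1)$ into $S_{fin}(\Gamma_{\bf 0},\Omega_{\bf 0})$ for $C_k(X)$, then Proposition~\ref{pr1} to exclude an embedded copy of $S_\omega$) coincide with the paper. The genuine gap is the step you yourself flag as the ``closing step'': the claim that non-embeddability of $S_\omega$ into $C_k(X)$, together with separability of $C_k(X)$ and first countability of $X$, forces $C_k(X)$ to be Fr\'echet--Urysohn. No argument is given, and the claim is consistently false under your stated hypotheses: take $X$ discrete of cardinality $\omega_1$. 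Then $X$ is first countable, stratifiable, $iw(X)=\aleph_0$, and $C_k(X)=\mathbb{R}^{\omega_1}$ is separable but not Fr\'echet--Urysohn (it is not even countably tight); moreover, if $\mathfrak{d}>\omega_1$, then $S_\omega$ has weight $\mathfrak{d}$ and so cannot embed into $\mathbb{R}^{\omega_1}$, which has weight $\omega_1$. Thus ``no $S_\omega$ $+$ separable $+$ $X$ first countable'' cannot yield Fr\'echet--Urysohnness in ZFC; the selection hypothesis must be used a second time after the $S_\omega$ step, and your proof never does so.

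What the Gruenhage--Tsaban--Zdomskyy theorem actually delivers for a first countable stratifiable $X$ with no copy of $S_\omega$ in $C_k(X)$ is only that $X$ is \emph{locally compact} --- not that $C_k(X)$ is Fr\'echet--Urysohn. The paper's proof then still has substantial work to do: by Ceder's theorem a locally compact stratifiable space is metrizable, hence $X=\bigsqcup_{\alpha<\tau}X_\alpha$ with each $X_\alpha$ clopen and $\sigma$-compact; $iw(X)=\aleph_0$ bounds $\tau$ by $\mathfrak{c}$, and if $\tau\ge\omega_1$ one maps $X$ continuously onto an uncountable discrete space $D$, which would inherit $S_{fin}(\Gamma^{sh}_k,\mathcal{K})$ and hence be Lindel\"of --- a contradiction. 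So $X$ is locally compact and Lindel\"of, hence hemicompact, and only then does $(4)\Rightarrow(3)$ give the Fr\'echet--Urysohn conclusion. This entire second half (Ceder, the clopen $\sigma$-compact decomposition, and the reapplication of the selection principle to the discrete quotient) is missing from your proposal. Your auxiliary lemma that $\gamma_k$-covers of stratifiable spaces are $\gamma_k$-shrinkable is not needed for the paper's route (there $(1)$ and $(2)$ are linked through $(4)$, not directly), and its sketched proof via the stratification operator is itself not worked out; but that is a secondary issue compared with the missing Lindel\"ofness argument.
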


\begin{proof} $(1)\Rightarrow(4)$. Since $X$ is first countable stratifiable
space and, by Proposition \ref{pr1},  $S_{\omega}$ cannot be
embedded into $C_k(X)$, then, by Theorem 2.2 (+ Remark) in
\cite{gtz}, $X$ is a locally compact. A locally compact
stratifiable space is metrizable \cite{ced}. A well-known that a
locally compact metrizable space can be represented as
$X=\bigsqcup\limits_{\alpha<\tau} X_{\alpha}$ where $X_{\alpha}$
is a $\sigma$-compact for each $\alpha<\tau$. Since
$iw(X)=\aleph_0$, then $\tau\leq\mathfrak{c}$. Claim that
$|\tau|<\omega_1$.

Assume that $|\tau|\geq \omega_1$. Then there is a continuous
mapping $f:X \mapsto D$ from $X$ onto a discrete space $D$ where
$|D|\geq \omega_1$. It follows that $D$ satisfies
$S_{fin}(\Gamma^{sh}_k,\mathcal{K})$ ($S_{fin}(\Gamma,\Omega)$)
and, hence, $D$ is Lindel$\ddot{o}$f, a contradiction.

It follows that $X$ is a locally compact and Lindel$\ddot{o}$f,
and, hence, $X$ is a hemicompact.

$(4)\Rightarrow(3)$. Since $X$ is hemicompact and
$iw(X)=\aleph_0$, then $C_k(X)$ is a separable metrizable space
\cite{mcnt}. Hence, $C_k(X)$ satisfies
$S_{1}(\mathcal{D},\mathcal{S})$, and, by Theorem \ref{th1}, $X$
satisfies $S_{1}(\mathcal{K},\Gamma_k)$.
\end{proof}

\begin{corollary} Suppose that $X$ is separable metrizable space. Then $X$ satisfies $S_{fin}(\Gamma_k,\mathcal{K})$ if and only if
$X$ is hemicompact.

\end{corollary}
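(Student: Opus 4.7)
The plan is to derive this corollary directly from the preceding theorem, by verifying that a separable metrizable space satisfies all of its hypotheses. Specifically, I need to check three things about a separable metrizable space $X$: that $X$ is first countable, that $X$ is stratifiable, and that $iw(X) = \aleph_0$. Once these are in place, the equivalence $(2) \Leftrightarrow (4)$ of the theorem immediately gives the corollary.

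First I would note that every metrizable space is first countable, since the balls of radius $1/n$ around any point form a countable local base. Second, every metrizable space is stratifiable; this is a classical fact (stratifiability was introduced by Borges as a common generalization of metrizability and the property of being a CW-complex, and metrizable $\Rightarrow$ stratifiable is immediate from the definition using a compatible metric). Third, for a separable metrizable space $X$, the weight $w(X)$ is countable, and since $iw(X) \le w(X)$ (take the identity as the required condensation), we get $iw(X) = \aleph_0$.

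With these observations the hypotheses of the theorem are fully satisfied. Therefore the equivalence ``$X$ satisfies $S_{fin}(\Gamma_k,\mathcal{K})$ iff $X$ is hemicompact'' (which is $(2) \Leftrightarrow (4)$ in the theorem) applies verbatim to $X$, yielding the corollary.

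There is really no significant obstacle here: the corollary is a pure specialization of the theorem, and the only thing one must be careful about is to cite the facts that metrizable implies first countable and stratifiable, and that separable metrizable spaces have countable $i$-weight. If one wanted to avoid invoking stratifiability, one could instead reprove the implication $(1)\Rightarrow(4)$ from the theorem in the simpler metrizable setting: a separable metrizable space in which $S_\omega$ cannot be embedded into $C_k(X)$ is locally compact, and a locally compact separable metrizable space is automatically hemicompact (being $\sigma$-compact and locally compact). But since the theorem is already proven, the shortest route is simply to quote it.
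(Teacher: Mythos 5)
Your proposal is correct and matches the paper's intent exactly: the corollary is stated without proof as a direct specialization of the preceding theorem, and you verify precisely the needed hypotheses (metrizable $\Rightarrow$ first countable and stratifiable; separable metrizable $\Rightarrow$ $iw(X)=\aleph_0$). Nothing further is required.
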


\begin{remark} In class of first countable stratifiable
spaces with $iw(X)=\aleph_0$ (in particular, in class of separable
metrizable spaces) all properties in Diagram~1 (and, hence,
Diagram~2) coincide.

\end{remark}


\bibliographystyle{model1a-num-names}
\bibliography{<your-bib-database>}

\begin{thebibliography}{10}

\bibitem{arh1}
A.V. Arhangel'skii, \textit{Topological function spaces}, Moskow.
Gos. Univ., Moscow, (1989), 223 pp. (Arhangel'skii A.V.,
\textit{Topological function spaces}, Kluwer Academic Publishers,
Mathematics and its Applications, 78, Dordrecht, 1992 (translated
from Russian)).


\bibitem{bbm1}
A. Bella, M. Bonanzinga, M. Matveev, \textit{Variations of
selective separability}, Topology and its Applications, 156,
(2009), 1241--1252.

\bibitem{bc}
A. Bella, C. Costantini, \textit{Sequential Separability vs
Selective Sequential Separability}, Filomat 29:1, (2015),
121--124.


\bibitem{cmkm}
A. Caserta, G. Di Maio, Lj.D.R. Ko$\check{c}$inac and E.
Meccariello, \textit{Applications of $k$-covers II}, Topology and
its Applications, 153, (2006), 3277--3293.

\bibitem{ced}
J.G. Ceder, \textit{Some generalizations of metric spaces},
Pacific J. Math., 11, (1961), 105--126.


\bibitem{glm}
P. Gartside, J.T.H. Lo, A. Marsh \textit{Sequential density},
Topology and its Applications, 130, (2003), p.75--86.




\bibitem{gtz}
G. Gruenhage, B. Tsaban, L. Zdomskyy, \textit{Sequential
properties of function spaces with the compact-open topology},
Topology and its Applications, 158, issue 3, (2011), 387--391.




\bibitem{koc}
Lj.D.R. Ko$\check{c}$inac, \textit{Selection principles and
continuous images}, Cubo Math. J. 8 (2) (2006) 23--31.

\bibitem{koc1}
Lj.D.R. Ko$\check{c}$inac, \textit{$\gamma$-sets, $\gamma_k$-sets
and hyperspaces}, Mathematica Balkanica 19, (2005), 109--118.

\bibitem{koc2}
Lj.D.R. Ko$\check{c}$inac, \textit{Closure properties of function
spaces}, Applied General Topology 4, (2), (2003), 255-–261.

\bibitem{llt}
S. Lin, C. Liu, H. Teng, \textit{Fan tightness and strong
Fr$\acute{e}$chet property of $C_k(X)$}, Advances in Mathematics
(Beiging) 23 (3) (1994) 234–-237 (in Chinese); MR. 95e:54007, Zbl.
808.54012.

\bibitem{l}
S. Lin, \textit{A note on Arens space and sequential fan},
Topology and its Appllications, 81, (1997), 185-–196.


\bibitem{mkn}
G.Di Maio, Lj.D.R. Ko$\check{c}$inac, T.Nogura \textit{Convergence
properties of hyperspaces}, J. Korean Math. Soc. 44 (2007), n.4,
845--854.

\bibitem{mkm}
G.Di Maio, Lj.D.R. Ko$\check{c}$inac, E.Meccariello
\textit{Applications of $k$-covers}, Acta Mathematica Sinica,
English Series vol. 22, n.4, (2006), 1151--1160.


\bibitem{ma}
A.J. Marsh, \textit{Topology of function spaces}, Doctoral
Dissertation, University of Pittsburgh, (2004).


\bibitem{mc}
 R.A. McCoy, \textit{Function spaces which are $k$-spaces}, Topology Proceedings 5 (1980), 139--146.




\bibitem{mcnt}
 R.A. McCoy, I. Ntantu, \textit{Topological Properties of Spaces of Continuous
 Functions}, Lecture Notes in Math., 1315,
 Springer-Verlag, Berlin, (1988).

\bibitem{mill}
A.W. Miller, \textit{A hodgepodge of sets of reals},  Note di
Matematica 27, suppl. n. 1, (2007), 25-–39.



\bibitem{nob}
 N. Noble, \textit{The density character of functions spaces}, Proc. Amer. Math. Soc. (1974), V.42, is.I.-P., 228--233.


\bibitem{os1}
A.V. Osipov, \textit{Application of selection principles in the
study of the properties of function spaces}, Acta Math. Hungar.,
154(2), (2018), 362--377.


\bibitem{ospy}
A.V. Osipov, E.G. Pytkeev, \textit{On sequential separability of
functional spaces}, Topology and its Applications, 221, (2017),
270--274.

\bibitem{os2}
A.V. Osipov, \textit{Classification of selectors for sequences of
dense sets of $C_p(X)$}, Topology and its Applications, 242,
(2018), 20--32.


\bibitem{os4}
A.V. Osipov, \textit{Classification of selectors for sequences of
dense sets of Baire functions}, submitted.


\bibitem{os3}
A.V. Osipov, \textit{On selective sequential separability of
function spaces with the compact-open topology}, submitted.

\bibitem{pp}
B.A. Pansera and V. Pavlovi$\acute{c}$, \textit{Open covers and
function spaces}, Matemati$\check{c}$ki Vesnik, 58, (2006),
57--70.




\bibitem{sak}
 M. Sakai, \textit{Property C'' and function spaces}, Proceedings of the
American Mathematical Society, 104, (1988), 917–-919.

\bibitem{sak1}
 M. Sakai, \textit{$k$-Fr$\acute{e}$chet-Urysohn property of $C_k(X)$}, Topology and its Applications, 154:7, (2007), 1516–-1520.

\bibitem{tss}
B. Tsaban, \textit{Selection principles and the minimal tower
problem}, Note di Matematica, 22, (2003), p. 53--81.



\end{thebibliography}







\end{document}